\newcommand{\ZZ}{\mathbb{Z}}
\newcommand{\RR}{\mathbb{R}}
\newcommand{\PP}{\mathcal{P}}
\newcommand{\newword}[1]{\textbf{\emph{#1}}}
\newcommand{\OO}{\mathcal{O}}
\newcommand{\M}{\mathcal{M}}
\newcommand{\T}{\mathcal{T}}
\newcommand{\SSS}{\mathcal{S}}
\newcommand{\SSSS}{\mathcal{A}}
\def\M{\mathcal{M}}
\newtheorem{conj}{Conjecture}[section]
\newtheorem{theorem}[conj]{Theorem}
\newtheorem{proposition}[conj]{Proposition}
\newtheorem{lemma}[conj]{Lemma}
\newtheorem{corollary}[conj]{Corollary}
\newtheorem{question}[conj]{Question}
\theoremstyle{definition}
\newtheorem{definition}[conj]{Definition}
\begin{document}
 
\title{Triangulations of $\Delta_{n-1} \times \Delta_{d-1}$ and Matching Ensembles}
\author{Suho Oh, Hwanchul Yoo }



\date{}
\maketitle

\abstract{We suggest an axiom system for a collection of matchings that describes the triangulation of product of simplices.}

\section{Introduction}

Triangulations of a product of two simplices are beautiful and important objects. In this paper, we describe a new combinatorial model for describing the triangulations of product of two simplices, $\Delta_{n-1} \times \Delta_{d-1}$. 

 The model we use is called \newword{matching ensemble}; it was motivated by \newword{matching fields} introduced and studied by Bernstein and Zelevinsky \cite{MF}. The $(n,d)$-matching field, for $n \geq d$, is a collection of bijections (``matchings'') between $d$-element subsets of $[n]=\{1,\ldots,n\}$ and the set $[d]=\{1,\ldots,d\}$. A matching field is \newword{coherent} if it satisfies  \newword{linkage property}, which is similar to the basis exchange axiom for matroids. These objects were used to study the \newword{Newton polytope} of the product of all maximal minors of an $n$-by-$d$ matrix of indeterminates.  
 
 We define an $(n,d)$-matching ensemble as a collection of matchings between subsets of $[n]$ and subsets of $[d]$ such that:
\begin{itemize}
 \item there is one matching between every pair with same cardinality, 
 \item a submatching of any matching is in the collection, and 
 \item the matchings contained in the collection satisfy the linkage property. 
\end{itemize}

We show that there is a bijection between $(n,d)$-matching ensembles and triangulations of $\Delta_{n-1} \times \Delta_{d-1}$.

\section{Triangulation of $\Delta_{n-1} \times \Delta_{d-1}$}

Following \cite{Postnikov01012009}, we will study $\Delta_{n-1} \times \Delta_{d-1}$ using a class of polytopes associated to bipartite graphs $G \subseteq K_{n,d}$, called \newword{root polytopes}. We think of the complete bipartite graph $K_{n,d}$ as having a set of left vertices $\{1,\ldots,n\}$ and a set of right vertices $\{\bar{1},\ldots,\bar{d}\}$. We define $Q_G$ to denote the convex hull of points $e_i - e_{\bar{j}}$ for edges $(i,\bar{j})$ of $G$ where $e_1,\ldots,e_n,e_{\bar{1}},\ldots,e_{\bar{d}}$ are the coordinate vectors in $\RR^{n+d}$. When $G$ is the complete bipartite graph $K_{n,d}$, the polytope $Q_G$ is exactly $\Delta_{n-1} \times \Delta_{d-1}$.

\begin{definition}
A \newword{triangulation} of a polytope $P$ is a subdivision of $P$ into a union of simplices of the same dimension as $P$ such that each simplex is the convex hull of some subset of vertices of $P$ and any two simplices intersect properly, i.e., the intersection of any two simplices is their common face.
\end{definition}

\begin{figure}[htbp]
	\centering
		\includegraphics[width=0.5\textwidth]{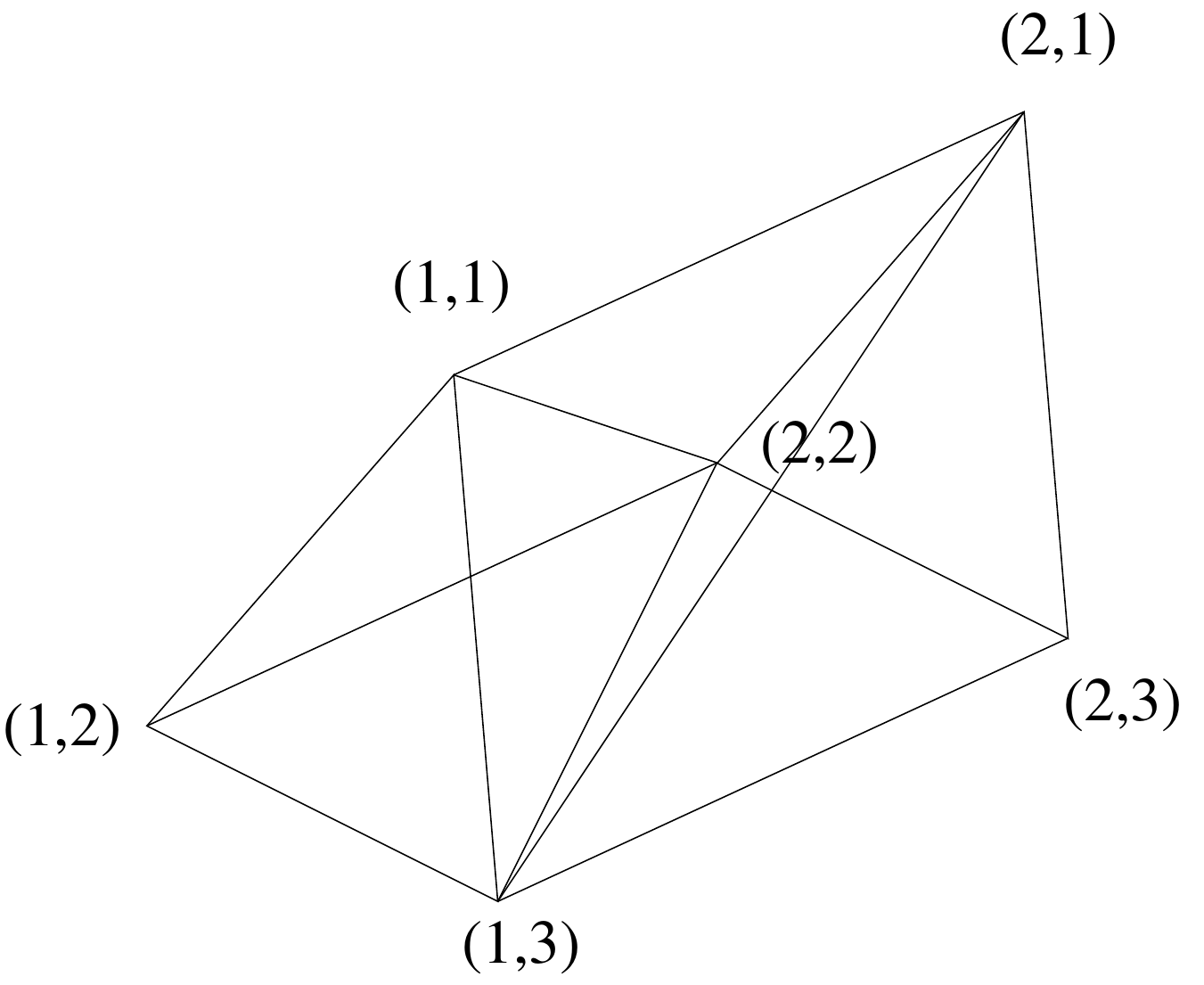}
	\caption{An example of a triangulation of $\Delta_1 \times \Delta_2$.}
	\label{fig:d3d3-v01}
\end{figure}

Figure~\ref{fig:d3d3-v01} is an example of a triangulation of $\Delta_1 \times \Delta_2$. We will be studying the triangulation of $\Delta_{n-1} \times \Delta_{d-1}$. The simplices of the triangulations can be described via spanning trees due to the following lemma:

\begin{lemma}[Lemma 12.5 of \cite{Postnikov01012009}]
\label{lem:simplextree}
For a subgraph $H \subseteq K_{n,d}$, the polytope $Q_H$ is a $n+d-2$ dimensional simplex if and only if $H$ is a spanning tree of $K_{n,d}$. All $n+d-2$ dimensional simplices of this form have the same volume $\frac{1}{(n+d-2)!}$.
\end{lemma}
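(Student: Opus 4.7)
My plan is to prove the lemma in two stages: first characterize when $Q_H$ is an $(n+d-2)$-dimensional simplex, and then compute the volume in the affirmative case.

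For the characterization, I would use that distinct edges of $H$ give distinct vertex vectors $e_i - e_{\bar{j}}$, so $Q_H$ has exactly $|E(H)|$ vertices, all lying in the $(n+d-2)$-dimensional affine subspace $\{x : \sum_{i \in [n]} x_i = 1, \ \sum_{j \in [d]} x_{\bar{j}} = -1\}$. Therefore $Q_H$ is an $(n+d-2)$-simplex iff $|E(H)| = n+d-1$ and these vertex vectors are affinely independent. A cycle $i_1 - \bar{j}_1 - i_2 - \bar{j}_2 - \cdots - i_k - \bar{j}_k - i_1$ in $H$ produces an explicit affine dependence: assign $+1$ to each forward edge $(i_s, \bar{j}_s)$ and $-1$ to each backward edge $(i_{s+1}, \bar{j}_s)$ (indices mod $k$); the coefficients sum to $0$ while the weighted sum of vertex vectors telescopes to $0$. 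Conversely, if $H$ is a forest I would peel off leaves iteratively: at a leaf $v$, the $v$-coordinate of any linear relation forces the coefficient of the unique edge at $v$ to vanish, and induction on $|E(H)|$ yields linear, hence affine, independence. Combined with $|E(H)| = n+d-1$, this forces $H$ to be a forest on $n+d$ vertices with $n+d-1$ edges, i.e., a spanning tree of $K_{n,d}$.

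For the volume, I would work in the lattice $L = \ZZ^{n+d} \cap \{x : \sum_i x_i = 0, \ \sum_j x_{\bar{j}} = 0\}$ in the tangent hyperplane, normalized so that $L$ has covolume $1$. The coordinate projection $\pi : \RR^{n+d} \to \RR^{n+d-2}$ forgetting $x_1$ and $x_{\bar{1}}$ restricts to a lattice isomorphism $L \cong \ZZ^{n+d-2}$. After relabeling I may assume $(1, \bar{1}) \in E(H)$, so $\pi(e_1 - e_{\bar{1}}) = 0$; the standard simplex-volume formula then gives $\mathrm{vol}(Q_H) = \frac{1}{(n+d-2)!}|\det M|$, where $M$ is the signed incidence matrix of $H \setminus \{(1, \bar{1})\}$ with rows indexed by $([n] \setminus \{1\}) \cup ([d] \setminus \{\bar{1}\})$. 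To show $|\det M| = 1$ I would induct on $n+d$: a spanning tree with $n+d \geq 3$ containing $(1, \bar{1})$ always admits a leaf $v \notin \{1, \bar{1}\}$ (otherwise both endpoints of the edge $(1,\bar{1})$ would be leaves of $H$, forcing $H = \{(1, \bar{1})\}$ and contradicting $n+d \geq 3$); the column of $M$ at the unique edge incident to $v$ has a single entry of $\pm 1$, and expansion along that column reduces to the same setup on $K_{n-1, d}$ or $K_{n, d-1}$. The base case $n + d = 2$ is immediate, with $Q_H$ a single point of normalized volume $1 = 1/0!$.

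The main obstacle I expect is the volume bookkeeping: specifying the lattice normalization precisely enough that the constant $\frac{1}{(n+d-2)!}$ emerges cleanly, and verifying that $\pi$ is genuinely a lattice isomorphism $L \cong \ZZ^{n+d-2}$. The cycle-versus-leaf dichotomy underlying the first half is routine graph theory, and once the determinant formula is in hand, the inductive leaf-pruning should go through without further complications.
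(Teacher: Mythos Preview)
The paper does not supply its own proof of this lemma; it is quoted verbatim as Lemma~12.5 of \cite{Postnikov01012009} and used as a black box. So there is no argument in the paper to compare your proposal against.

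That said, your outline is correct and self-contained. The characterization half is the standard argument: the vectors $e_i - e_{\bar j}$ sit in the codimension-$2$ affine slice $\{\sum_{i} x_i = 1,\ \sum_{j} x_{\bar j} = -1\}$, cycles yield the obvious alternating-sign affine dependence, and leaf-peeling establishes independence for forests. For the volume half, your choice of normalization is exactly right: forgetting the $x_1$ and $x_{\bar 1}$ coordinates gives a genuine lattice isomorphism $L \cong \ZZ^{n+d-2}$ because $x_1$ and $x_{\bar 1}$ are uniquely and integrally recovered from the two linear constraints. After translating so that the vertex $e_1 - e_{\bar 1}$ goes to the origin, the remaining $n+d-2$ edge vectors form precisely the reduced signed vertex--edge incidence matrix of the tree $H$ with the row for $1$, the row for $\bar 1$, and the column for $(1,\bar 1)$ deleted; it is classical (and your leaf-peeling induction reproves it) that this matrix is unimodular. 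The one place to be slightly careful is the claim that a leaf outside $\{1,\bar 1\}$ exists when $n+d\ge 3$: your parenthetical handles it, since a tree has at least two leaves, and if both were $1$ and $\bar 1$ then the edge $(1,\bar 1)$ would already exhaust the tree. Nothing in the proposal is a genuine gap.
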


\begin{figure}[htbp]
	\centering
		\includegraphics[width=0.3\textwidth]{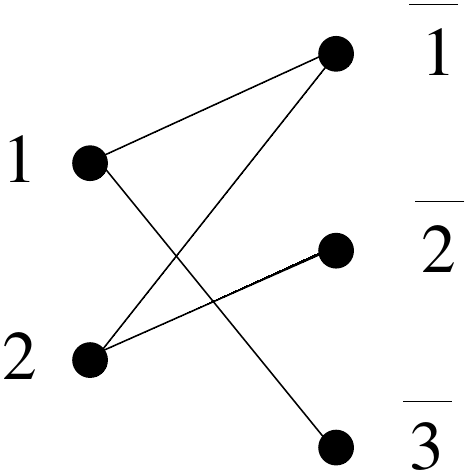}
	\caption{A spanning tree of $K_{2,3}$ which encodes one of the simplices in Figure~\ref{fig:d3d3-v01}.}
	\label{fig:d3d3-tree}
\end{figure}

If we look at the triangulation in Figure~\ref{fig:d3d3-v01}, one of the simplices is a convex hull of $e_1 - e_{\bar{1}},e_1 - e_{\bar{3}},e_2 - e_{\bar{1}}, e_2 - e_{\bar{2}}$. This is encoded as a spanning tree in Figure~\ref{fig:d3d3-tree}.

Lemma~\ref{lem:simplextree} tells us that a triangulation of $\Delta_{n-1} \times \Delta_{d-1}$ is a collection of simplices $\{Q_{T_1},\ldots,Q_{T_s}\}$, for some spanning trees $T_1,\ldots,T_s$ of $K_{n,d}$ such that $Q_{K_{n,d}} = \cup Q_{T_i}$ and each intersection $Q_{T_i} \cap Q_{T_j}$ is the common face of the two simplices. Lemma~\ref{lem:simplextree} combined with Theorem 12.2 of \cite{Postnikov01012009}, implies that:

\begin{lemma}
\label{lem:triangcardinality}
A triangulation of $\Delta_{n-1} \times \Delta_{d-1}$ contains exactly $|H_{n-1,d}|$ number of $n+d-2$ dimensional simplices, where $H_{n,d}$ is defined as the set $\{(a_1,\ldots,a_d)| \sum_{i=1}^d a_i = n, a_i \in \ZZ^{\geq 0} \}$.
\end{lemma}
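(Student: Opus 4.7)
The plan is to use a volume‐counting argument. The triangulation decomposes the polytope $Q_{K_{n,d}} = \Delta_{n-1} \times \Delta_{d-1}$ into full‐dimensional simplices that intersect only in proper faces, so the volumes of the simplices add up to the volume of the whole polytope. Hence the number of simplices in the triangulation equals the total volume divided by the volume of a single simplex, provided all simplices have the same volume.

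First I would invoke Lemma~\ref{lem:simplextree}: every $(n+d-2)$‐dimensional simplex appearing in a triangulation of $\Delta_{n-1}\times \Delta_{d-1}$ is of the form $Q_T$ for a spanning tree $T$ of $K_{n,d}$, and each such simplex has volume exactly $\frac{1}{(n+d-2)!}$. Thus the number of simplices in any triangulation is the constant
\[
\mathrm{Vol}(\Delta_{n-1}\times \Delta_{d-1}) \cdot (n+d-2)!,
\]
independent of the triangulation chosen.

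Next I would cite Theorem 12.2 of \cite{Postnikov01012009}, which computes the normalized volume of $\Delta_{n-1}\times \Delta_{d-1}$ as $\binom{n+d-2}{d-1}$, equivalently $\mathrm{Vol}(\Delta_{n-1}\times \Delta_{d-1}) = \binom{n+d-2}{d-1}/(n+d-2)!$. Combined with the previous paragraph, the count of simplices is exactly $\binom{n+d-2}{d-1}$.

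Finally I would identify this binomial coefficient with $|H_{n-1,d}|$: the set $H_{n-1,d}$ consists of weak compositions $(a_1,\ldots,a_d)$ of $n-1$ into $d$ nonnegative parts, and a standard stars‐and‐bars count gives $|H_{n-1,d}| = \binom{(n-1)+(d-1)}{d-1} = \binom{n+d-2}{d-1}$, completing the proof. There is no serious obstacle here; the lemma is essentially a bookkeeping consequence of the two cited results, and the only thing to be careful about is matching the convention for $|H_{n-1,d}|$ with the normalized volume formula.
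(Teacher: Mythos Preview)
Your argument is correct and is precisely the route the paper indicates: the lemma is stated as an immediate consequence of Lemma~\ref{lem:simplextree} together with Theorem~12.2 of \cite{Postnikov01012009}, and your volume-counting plus stars-and-bars identification of $|H_{n-1,d}|=\binom{n+d-2}{d-1}$ is exactly the intended unpacking.
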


For two spanning trees $T$ and $T'$ of $K_{n,d}$, let $U(T,T')$ be union of edges of $T$ and $T'$ with edges of $T$ oriented from left to right and edges of $T'$ oriented from right to left. A directed cycle is a sequence of directed edges $(i_1,i_2),(i_2,i_3),\ldots,(i_{k-1},i_k),(i_k,i_1)$ such that all $i_1,\ldots,i_k$ are distinct. We say that $T$ and $T'$ are \newword{compatible} if the directed graph $U(T,T')$ has no directed cycles of length $\geq 4$, and \newword{incompatible} if not.

\begin{lemma}[Lemma 12.6 of \cite{Postnikov01012009}]
\label{lem:triangcompatible}
For two trees $T$ and $T'$, the intersection $Q_T \cap Q_{T'}$ is a common face of the simplices $Q_T$ and $Q_{T'}$ if and only if $T$ and $T'$ are compatible.
\end{lemma}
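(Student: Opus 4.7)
The plan is to compare, for each point $p \in Q_T \cap Q_{T'}$, the minimal supporting face of $p$ inside $Q_T$ with the minimal supporting face inside $Q_{T'}$, and then translate any discrepancy into a directed cycle of length $\geq 4$ in $U(T,T')$. Because $Q_T$ is a simplex (Lemma~\ref{lem:simplextree}), every $p \in Q_T$ has a unique expression $p = \sum_{e \in A} \lambda_e v_e$ with $A \subseteq T$ the support and each $\lambda_e > 0$, where I write $v_{(i,\bar{j})} := e_i - e_{\bar{j}}$; the face $Q_A$ is then the smallest face of $Q_T$ containing $p$, and similarly one obtains $B \subseteq T'$ for the $T'$-expression. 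So $Q_T \cap Q_{T'}$ is a common face iff for every $p$ in the intersection one has $A = B$ (which forces $A \subseteq T \cap T'$).

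For the direction ``not compatible $\Rightarrow$ not a common face'', I would take a directed cycle $i_1 \ra \bar{j_1} \ra i_2 \ra \bar{j_2} \ra \cdots \ra i_k \ra \bar{j_k} \ra i_1$ in $U(T,T')$ with $k \geq 2$, where the edges $(i_a,\bar{j_a})$ lie in $T$ and the edges $(i_{a+1},\bar{j_a})$ (indices mod $k$) lie in $T'$. The identity $\sum_a e_{i_a} = \sum_a e_{i_{a+1}}$ shows
\[ p := \tfrac{1}{k}\sum_{a=1}^k (e_{i_a}-e_{\bar{j_a}}) = \tfrac{1}{k}\sum_{a=1}^k (e_{i_{a+1}}-e_{\bar{j_a}}), \]
exhibiting $p$ as a convex combination of vertices of $Q_T$ with support $\{(i_a,\bar{j_a})\}$ and of vertices of $Q_{T'}$ with support $\{(i_{a+1},\bar{j_a})\}$. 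Distinctness of the $i_a$'s and of the $\bar{j_a}$'s, together with $k \geq 2$, makes these two supports disjoint, so $p$ lies in the relative interiors of different faces of the two simplices and the intersection fails to be a common face.

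For the converse, suppose $p \in Q_T \cap Q_{T'}$ has two expressions $p = \sum_A \lambda_e v_e = \sum_B \mu_e v_e$ with $A \neq B$. I would build a non-negative flow on $U(T,T')$ by placing flow $\lambda_e$ on the left-to-right arc of each $e \in A$ and flow $\mu_e$ on the right-to-left arc of each $e \in B$; edges in $A \cap B \subseteq T \cap T'$ carry flow on both of their arcs. Reading the $i$-th and $\bar{j}$-th coordinates of $p$ off the two expressions yields flow conservation at every vertex, so the standard flow-decomposition theorem (valid over $\RR$ for conservative non-negative flows) writes the flow as a non-negative combination of simple directed cycles. The main obstacle is the final observation: since every 2-cycle of $U(T,T')$ is the pair of arcs on a single $T \cap T'$ edge, any arc of the flow coming from $A \setminus B$ (which is nonempty because $A \neq B$) cannot be absorbed into 2-cycles, and so must lie on a simple directed cycle of length $\geq 4$ in the decomposition---the required witness of incompatibility.
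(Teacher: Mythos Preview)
The paper does not supply its own proof of this lemma; it simply quotes it as Lemma~12.6 of Postnikov.  Your argument is correct and is essentially the classical one that Postnikov gives: translate the two barycentric representations of a point $p\in Q_T\cap Q_{T'}$ into a nonnegative circulation on the directed bipartite graph $U(T,T')$, and use cycle decomposition of that circulation to produce a directed cycle of length $\geq 4$ exactly when the two supports disagree.  Both the ``forward'' construction of the point $p$ from a long directed cycle and the ``backward'' flow argument are handled correctly.

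Two small comments on exposition.  First, your equivalence ``$Q_T\cap Q_{T'}$ is a common face $\Longleftrightarrow$ the $T$- and $T'$-supports of every $p$ agree'' is right, but it is worth saying explicitly that in the backward implication the intersection is then $Q_{T\cap T'}$, which is visibly a face of each simplex.  Second, the parenthetical ``$A\setminus B$ is nonempty because $A\neq B$'' is not literally immediate from $A\neq B$; a priori one could have $A\subsetneq B$.  It \emph{is} true here, because $A\subseteq B\subseteq T'$ together with affine independence of the vertices of the simplex $Q_{T'}$ would force the two convex representations of $p$ to coincide, hence $A=B$.  One sentence to that effect (or simply arguing symmetrically with an edge of the nonempty symmetric difference) would close the only visible gap.
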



We will be using left-degree vectors and right-degree vectors which are useful for analyzing the simplices.

\begin{definition}[Definition 12.7 of \cite{Postnikov01012009}]
For a spanning tree $T \subseteq K_{n,d}$, we define the \newword{left degree vector} $LD(T) = (a_1,\ldots,a_n)$ and \newword{right degree vector} $RD(T) = (a_{\bar{1}},\ldots,a_{\bar{d}})$, where $a_i$ and $a_{\bar{j}}$ are the degrees of the vertices $i$ and $\bar{j}$ in $T$ minus $1$.
\end{definition}

\begin{lemma}[Lemma 12.7 of \cite{Postnikov01012009}]
\label{lem:triangDV}
Let $\{Q_{T_1},\ldots,Q_{T_s}\}$ be a triangulation of $K_{n,d}$. Then for $i \not = j$, $T_i$ and $T_j$ have different left degree vectors and different right degree vectors.
\end{lemma}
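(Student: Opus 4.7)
The plan is to argue by contradiction. Suppose $T_i$ and $T_j$ are two distinct trees in the triangulation with $LD(T_i)=LD(T_j)$. By Lemma~\ref{lem:triangcompatible} the trees $T_i$ and $T_j$ must be compatible, so $U(T_i,T_j)$ contains no directed cycle of length $\geq 4$. I will derive a contradiction by producing such a cycle; the statement about right degree vectors will then follow by the symmetric argument obtained from interchanging the left and right sides of $K_{n,d}$.

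The hypothesis $LD(T_i)=LD(T_j)$ is equivalent to $\deg_{T_i}(v)=\deg_{T_j}(v)$ at every left vertex $v$, so in $U(T_i,T_j)$ each left vertex has its out-degree (from $T_i$-edges oriented left-to-right) equal to its in-degree (from $T_j$-edges oriented right-to-left). Consider the symmetric difference $D := T_i \triangle T_j$, viewed as a directed subgraph of $U(T_i,T_j)$: left vertices remain balanced, and since $D$ excludes all edges of $T_i\cap T_j$, any directed cycle inside $D$ would automatically have length $\geq 4$ and violate compatibility. Hence $D$ is a directed acyclic graph, and the balance condition forces all of its non-isolated sources and sinks to be right vertices; the edges of $D$ therefore decompose into directed paths running from right-vertex sources $\bar s$ (where $\deg_{T_j}(\bar s)>\deg_{T_i}(\bar s)$) to right-vertex sinks $\bar t$ (where $\deg_{T_i}(\bar t)>\deg_{T_j}(\bar t)$). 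Since $T_i\neq T_j$ makes $D$ non-empty, at least one such directed path $P:\bar s \to v_1 \to \bar u_1 \to \cdots \to v_m \to \bar t$ with $m\geq 1$ exists, and its length is $2m\geq 2$.

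To finish, I close $P$ into a directed cycle of length $\geq 4$ by constructing an alternating return walk from $\bar t$ back to $\bar s$ in $U(T_i,T_j)$ that uses $T_j$-edges out of right vertices and $T_i$-edges out of left vertices (common edges of $T_i\cap T_j$ being available in either role), and then extracting a simple cycle by taking the sub-cycle cut off at the first revisited vertex of the concatenated walk. The principal technical obstacle is verifying that such a return walk reaches $\bar s$: I plan to analyze the \emph{alternating reachability} set $A\subseteq V(K_{n,d})$ obtained iteratively from $\bar t$ by alternately adjoining $T_j$-neighbors of right vertices and $T_i$-neighbors of left vertices, and to show that if $\bar s\notin A$ then the cut $(A, V(K_{n,d})\setminus A)$ could not be crossed in the required restricted orientations by the spanning trees $T_i$ and $T_j$ simultaneously, contradicting the fact that both $T_i$ and $T_j$ span $K_{n,d}$. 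The sink/source structure of $\bar t$ and $\bar s$ provides the extra $T_i$-edges at $\bar t$ and $T_j$-edges at $\bar s$ needed to drive this closure argument past the generic cut obstructions. The resulting directed cycle of length $\geq 4$ contradicts compatibility and completes the proof.
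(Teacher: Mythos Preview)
The paper does not prove this lemma; it is quoted verbatim from Postnikov (Lemma~12.7 of \cite{Postnikov01012009}) with no argument given here, so there is no in-paper proof to compare against.

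As a standalone argument your proposal has a real gap precisely where you label it the ``principal technical obstacle'': the existence of the return walk from $\bar t$ to $\bar s$ is only \emph{planned}, and the one-line justification (``contradicting the fact that both $T_i$ and $T_j$ span $K_{n,d}$'') is not a proof. Spanning alone does not rule out a proper out-closed set $A$ in $U(T_i,T_j)$; one must use $LD(T_i)=LD(T_j)$ in an essential way. The gap \emph{can} be closed by a short counting argument: if $\emptyset\neq A\subsetneq V$ is out-closed in $U(T_i,T_j)$ and $A=A_L\sqcup A_R$, then every $T_i$-edge at a vertex of $A_L$ lands in $A_R$, so $T_i|_A$ is a forest with exactly $\sum_{v\in A_L}\deg_{T_i}(v)$ edges, giving $\sum_{v\in A_L}\deg_{T_i}(v)\le |A|-1$; dually, every $T_j$-edge at a vertex of $[n]\setminus A_L$ lands in $[\bar d]\setminus A_R$, so $T_j|_{V\setminus A}$ is a forest with $(n+d-1)-\sum_{v\in A_L}\deg_{T_j}(v)$ edges, giving $\sum_{v\in A_L}\deg_{T_j}(v)\ge |A|$. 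Equality of left-degree vectors makes these two sums equal, a contradiction. (Nonemptiness of $A_L,A_R$ and their complements follows from the spanning hypothesis.) This in fact shows $U(T_i,T_j)$ is strongly connected, after which your path-$P$ machinery is unnecessary: take any edge $e\in T_i\setminus T_j$, close it into a directed walk, and extract a simple cycle through $e$.

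There is also a smaller gap in the extraction step: ``the sub-cycle cut off at the first revisited vertex'' may lie entirely inside the return walk and be a length-$2$ cycle on a common edge. The fix is to decompose the closed walk into simple directed cycles; one of them contains an edge of $P\subseteq T_i\triangle T_j$, and any simple cycle in $U(T_i,T_j)$ through an edge of $T_i\setminus T_j$ cannot have length $2$, hence has length $\ge 4$.
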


Combining Lemma~\ref{lem:triangcardinality}, Lemma~\ref{lem:triangcompatible} and Lemma~\ref{lem:triangDV}, we can conclude the following:

\begin{proposition}
\label{prop:triangfill}
Let $\T = \{Q_{T_1},\ldots,Q_{T_s}\}$ be a set of polytopes where $T_1,\ldots,T_s$ are spanning trees of $K_{n,d}$. This set defines a triangulation of $\Delta_{n-1} \times \Delta_{d-1}$ if and only if the following conditions are satisfied:
\begin{itemize}
\item for any $i \not =  j$, $T_i$ and $T_j$ are compatible and,
\item the map $T \rightarrow RD(T)$ is a bijection between $\T$ and $H_{n-1,d}$.
\end{itemize}
\end{proposition}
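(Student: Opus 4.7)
The plan is to treat the two implications separately, with the forward direction amounting to a direct bookkeeping of the lemmas already proved and the reverse direction requiring one additional volume comparison.

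For the forward direction, suppose $\T$ triangulates $\Delta_{n-1}\times\Delta_{d-1}$. Pairwise compatibility of the $T_i$ is immediate from Lemma~\ref{lem:triangcompatible}. For any spanning tree $T$ of $K_{n,d}$, the edge count gives $\sum_{j}(\deg(\bar{j})-1)=(n+d-1)-d=n-1$, so $RD(T)\in H_{n-1,d}$. Lemma~\ref{lem:triangDV} makes the map $T\mapsto RD(T)$ injective on $\T$, and Lemma~\ref{lem:triangcardinality} forces $|\T|=|H_{n-1,d}|$, so the map is a bijection onto $H_{n-1,d}$.

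For the reverse direction, assume the two conditions. Pairwise compatibility combined with Lemma~\ref{lem:triangcompatible} yields that any two $Q_{T_i},Q_{T_j}$ meet in a common face, so the intersection condition in the definition of a triangulation already holds. Moreover, since the vertex set $\{e_i-e_{\bar{j}}:(i,\bar{j})\in T\}$ recovers $T$, the bijection assumption forces the $T_i$, hence the simplices $Q_{T_i}$, to be pairwise distinct; then the ``common face'' intersection must be of dimension $<n+d-2$, so the interiors of the $Q_{T_i}$ are pairwise disjoint. It remains to show $\bigcup_i Q_{T_i}=Q_{K_{n,d}}$.

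I would close this with a volume argument. By Lemma~\ref{lem:simplextree} each $Q_{T_i}$ is a simplex of volume $1/(n+d-2)!$ contained in $Q_{K_{n,d}}$, and the $|H_{n-1,d}|$ of them have disjoint interiors, so their union has total volume $|H_{n-1,d}|/(n+d-2)!$. To avoid circularity, pin down the volume of $\Delta_{n-1}\times\Delta_{d-1}$ by invoking the existence of any single triangulation (for instance the staircase triangulation from \cite{Postnikov01012009}) and applying Lemmas~\ref{lem:simplextree} and \ref{lem:triangcardinality} to that one: this gives $\mathrm{Vol}(\Delta_{n-1}\times\Delta_{d-1})=|H_{n-1,d}|/(n+d-2)!$. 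Since $\bigcup_i Q_{T_i}$ is a closed subset of $Q_{K_{n,d}}$ of equal volume, equality of sets follows.

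The main obstacle is the covering statement, not the proper-intersection statement, because compatibility alone does not see volume. I expect the cleanest resolution to be exactly this volume matching, using the bijection with $H_{n-1,d}$ to produce the right count of simplices and a reference triangulation to calibrate the total volume of the product of simplices.
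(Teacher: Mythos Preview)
Your argument is correct and matches the paper's intent: the paper does not spell out a proof but simply asserts the proposition as a consequence of Lemmas~\ref{lem:triangcardinality}, \ref{lem:triangcompatible}, and \ref{lem:triangDV}. Your proposal supplies precisely the missing step for the reverse implication, namely the volume comparison showing that the union of the $Q_{T_i}$ covers $Q_{K_{n,d}}$, together with the observation that a closed full-measure subset of a convex body must be the whole body; this is the standard way to complete such a sketch.
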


\section{Tropical oriented matroids}

In this section, we will go over tropical oriented matroids, which is useful for studying triangulations of $\Delta_{n-1} \times \Delta_{d-1}$.

\begin{definition}
An $(n,d)$-type is an $n$-tuple $A=(A_1,\ldots,A_n)$ of nonempty subsets of $[d]:=\{1,\cdots,d\}$. The sets $A_1,\cdots,A_n$ are called the \newword{coordinates} of $A$. 
\end{definition}

We can think of $(n,d)$-types as subgraphs of $K_{n,d}$. Given an $(n,d)$-type $A=(A_1,\ldots,A_n)$, let $G_A$ be a subgraph of $K_{n,d}$ consisting of edges $(i,\bar{j})$ for each $j \in A_i$. We will say that an $(n,d)$-type $A$ is \newword{generic} if $G_A$ does not contain a cycle. Given an $(n,d)$-type $A$, we say that an $(n,d)$-type $B$ is a \newword{refinement} of $A$ if $G_B$ is a subgraph of $G_A$. We say that two $(n,d)$-types $A$ and $B$ are \newword{compatible} if their corresponding graphs $G_A$ and $G_B$ are compatible. There is an easy way to check if two types $A$ and $B$ are compatible or not:

\begin{lemma}
\label{lem:typecomp}
Let $A$ and $B$ be $(n,d)$-types. We say that there is a \newword{cycle} of length $k$ between $A$ and $B$ if after some relabeling of the set $[n]$, we have $i_1 \in A_1,B_k,i_2 \in A_2,B_1,\ldots,i_k \in A_k,B_{k-1}$, where $k \geq 2$ and $i_1,\ldots,i_k$ are all distinct. Then $A$ and $B$ are compatible if and only if there is no cycle between $A$ and $B$.
\end{lemma}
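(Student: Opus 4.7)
The plan is to unpack both notions of ``cycle'' and show they describe the same combinatorial object, namely a closed alternating walk in $K_{n,d}$ using $G_A$-edges and $G_B$-edges alternately, with all left-vertices distinct and all right-vertices distinct.

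First I would observe the bipartite structure of $U(G_A, G_B)$. Since every $G_A$-edge is oriented from a left vertex in $[n]$ to a right vertex in $\{\bar 1, \dots, \bar d\}$ and every $G_B$-edge is oriented from a right vertex to a left vertex, any directed cycle in $U(G_A, G_B)$ must alternate between the two kinds of edges. Consequently its length is even, say $2k$, and the condition ``length $\geq 4$'' becomes $k \geq 2$. Such a cycle necessarily has the form
\[
v_1 \to \bar u_1 \to v_2 \to \bar u_2 \to \cdots \to v_k \to \bar u_k \to v_1,
\]
with the $v_\ell \in [n]$ pairwise distinct and the $\bar u_\ell$ pairwise distinct (because distinct vertices of a cycle are distinct).

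Next I would translate between this picture and the statement of the lemma. Each arrow $v_\ell \to \bar u_\ell$ comes from a $G_A$-edge, so $u_\ell \in A_{v_\ell}$; each arrow $\bar u_\ell \to v_{\ell+1}$ comes from a $G_B$-edge, so $u_\ell \in B_{v_{\ell+1}}$ (indices mod $k$). Using the freedom to relabel $[n]$ I put $v_\ell = \ell$; renaming the $u_\ell$'s by a cyclic shift so that the element straddling left vertices $\ell$ and $\ell-1$ is called $i_\ell$, I recover exactly the condition $i_\ell \in A_\ell \cap B_{\ell - 1}$ with all $i_\ell \in [d]$ distinct. This proves the ``only if'' direction.

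For the converse, given indices $i_1, \dots, i_k$ as in the lemma, the memberships $i_\ell \in A_\ell$ and $i_\ell \in B_{\ell-1}$ give arrows $\ell \to \bar{i_\ell}$ and $\bar{i_\ell} \to \ell-1$ in $U(G_A, G_B)$, which concatenate into the closed walk
\[
1 \to \bar{i_1} \to k \to \bar{i_k} \to k-1 \to \bar{i_{k-1}} \to \cdots \to 2 \to \bar{i_2} \to 1.
\]
Distinctness of the $k$ left vertices holds by the relabeling, and distinctness of the right vertices $\bar{i_1}, \dots, \bar{i_k}$ holds by the hypothesis that the $i_\ell$ are distinct; hence this closed walk is a genuine directed cycle of length $2k \geq 4$, witnessing incompatibility of $G_A$ and $G_B$.

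I do not anticipate any substantive obstacle, as the lemma amounts to a re-encoding of the definition. The one place requiring a little care is the cyclic indexing: matching the ``$i_\ell \in A_\ell \cap B_{\ell-1}$'' convention against the geometric cycle requires choosing the right cyclic shift of the $u_\ell$'s, and it is easy to introduce an off-by-one error if one is not careful to verify both the $A$-membership and the $B$-membership against the arrows at each step.
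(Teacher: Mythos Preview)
Your proposal is correct and follows essentially the same approach as the paper: both arguments identify a cycle of length $k$ between $A$ and $B$ with a directed cycle of length $2k$ in $U(G_A,G_B)$ by alternating $G_A$- and $G_B$-edges, using the bipartite orientation to force even length. One small slip: with your labeling $v_\ell=\ell$ you obtain $u_\ell\in A_\ell\cap B_{\ell+1}$, which matches the lemma's pattern $i_\ell\in A_\ell\cap B_{\ell-1}$ only after a \emph{reversal} of the indexing (equivalently, a different relabeling of $[n]$), not a cyclic shift---exactly the off-by-one pitfall you yourself anticipated.
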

\begin{proof}
A cycle of length $k$ between $A$ and $B$, given by $i_1 \in A_1,B_k,i_2 \in A_2,B_1,\ldots,i_k \in A_k,B_{k-1}$, corresponds to an alternating cycle of length $2k$ in $U(G_A,G_B)$, given by $(\bar{i_1},k),(k,\bar{i_k}),(\bar{i_k},k-1),\ldots,(\bar{i_2},1),(1,\bar{i_1})$. Hence there is a cycle of length $k$ between $A$ and $B$ if and only if there is a cycle of length $2k$ in $U(G_A,G_B)$, which implies that there is a cycle between $A$ and $B$ if and only if $A$ and $B$ are incompatible.
\end{proof}

Now we may define a generic tropical oriented matroid, which is a collection of generic types, satisfying four axioms.

\begin{definition}[\cite{2007arXiv0706.2920A}]
A \newword{generic tropical oriented matroid} $\OO$ (with parameters $(n,d)$) is a collection of generic $(n,d)$-types which satisfy the following four axioms:
\begin{itemize}
\item Boundary : For each $j \in [d]$, the type $\newword{j} :=(j,\cdots,j)$ is in $\OO$.
\item Elimination : If we have two types $A$ and $B$ in $\OO$ and a position $j \in [n]$, then there exists a type $C$ in $\OO$ with $C_j = A_j \cup B_j$, and $C_k \in \{A_k,B_k,A_k \cup B_k\}$ for all $k \in [n]$.
\item Comparability : For any two types $A$ and $B$, they are compatible.
\item Surrounding : If $A$ is a type in $\OO$, then any refinement of $A$ is also in $\OO$.
\end{itemize}
\end{definition}

It was conjectured in \cite{2007arXiv0706.2920A} and proved in \cite{MR2820754} that generic tropical oriented matroids (with parameters $(n,d))$ are in bijection with triangulations of $\Delta_{n-1} \times \Delta_{d-1}$. Later, Horn \cite{MR2957992} showed that the same relationship holds for tropical oriented matroids and subdivisions. Unless otherwise stated, all the tropical oriented matroids we use in this paper will be generic tropical oriented matroids.

We call a type $A$, where $G_A$ is a spanning tree of $K_{n,d}$, as a \newword{tree-type} (In \cite{2007arXiv0706.2920A}, the word \newword{vertex} is used for such types).

\begin{theorem}[\cite{2007arXiv0706.2920A},\cite{MR2820754}]
The tree-types of a tropical oriented matroid $\OO$ completely determine it. The set $\{Q_{G_{A_1}},\ldots,Q_{G_{A_s}}\}$ describes a triangulation of $\Delta_{n-1} \times \Delta_{d-1}$ if and only if $A_1,\ldots,A_s$ are tree-types of a generic tropical oriented matroid (with parameters $(n,d)$).
\end{theorem}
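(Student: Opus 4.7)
The plan is to establish the theorem in three stages: showing $\OO$ is determined by its tree-types, proving the forward direction (triangulation gives a generic TOM), and proving the reverse direction (generic TOM gives a triangulation). The latter two stages are reduced to Proposition~\ref{prop:triangfill}.

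For the first stage, I would show that every $B \in \OO$ is a refinement of some tree-type in $\OO$; combined with the Surrounding axiom, this forces $\OO$ to coincide with the downward closure of its tree-types, so the tree-types determine $\OO$. If $G_B$ is not already a spanning tree, then $G_B$ is a forest with fewer than $n+d-1$ edges, and a suitable application of Elimination against a boundary type $\mathbf{k}$ yields a $C \in \OO$ with $G_C \supsetneq G_B$ still a forest; iterating produces a tree-type in $\OO$ of which $B$ is a refinement. The delicacy is choosing $k$ and the position so that at least one of the generic outputs permitted by Elimination genuinely adds an edge without creating a cycle.

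For the forward direction, let $\{Q_{G_{A_1}},\ldots,Q_{G_{A_s}}\}$ be a triangulation and define $\OO$ to be the collection of all generic $(n,d)$-types that refine some $A_i$. Surrounding holds by construction; Comparability follows from Lemma~\ref{lem:triangcompatible} together with Lemma~\ref{lem:typecomp}; and Boundary holds because Proposition~\ref{prop:triangfill} applied to the given triangulation produces a tree-type $A_i$ with right-degree vector concentrated at $\bar j$ (i.e.\ $RD(A_i) = (0,\ldots,n-1,\ldots,0)$), whose underlying spanning tree $G_{A_i}$ must then contain the star $G_{\mathbf{j}}$, so $\mathbf{j}$ is a refinement of $A_i$. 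Elimination is the delicate axiom: given $A, B \in \OO$ and a position $j$, one argues geometrically by tracking a line segment between vertices of $Q_{G_A}$ and $Q_{G_B}$ and using the covering property of the triangulation to locate a simplex $Q_{G_C}$ whose tree-type $C$ satisfies the required coordinate constraints.

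For the reverse direction, let $A_1,\ldots,A_s$ be the tree-types of a TOM $\OO$. By Proposition~\ref{prop:triangfill} it suffices to verify pairwise compatibility and that $A \mapsto RD(A)$ is a bijection between $\{A_1,\ldots,A_s\}$ and $H_{n-1,d}$. Compatibility is immediate from Comparability and Lemma~\ref{lem:typecomp}. Injectivity of $RD$ follows because two distinct compatible tree-types with the same right-degree vector would force a forbidden cycle in $U(G_{A_i}, G_{A_j})$, contradicting Comparability. For surjectivity, given $v = (a_1,\ldots,a_d) \in H_{n-1,d}$, one builds a tree-type of right-degree $v$ by iteratively applying Elimination to the boundary types $\mathbf{1},\ldots,\mathbf{d}$ in a sequence whose multiplicities match $v$, preserving genericity at each step.

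The main obstacle throughout is controlling the Elimination axiom: verifying it geometrically in the forward direction, and deploying it in the reverse direction with enough coordination to realize every prescribed right-degree vector while maintaining genericity. Interfacing Elimination with Surrounding and with the cycle-tracking compatibility arguments is the technical heart of both halves of the bijection.
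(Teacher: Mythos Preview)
The paper does not supply a proof of this theorem: it is quoted from \cite{2007arXiv0706.2920A} and \cite{MR2820754} and used as background. There is therefore no ``paper's own proof'' to compare your proposal against; the present paper takes the result as input and builds Theorem~\ref{thm:main} on top of it.

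That said, your outline has real gaps relative to what the cited papers actually do. In Stage~1, Elimination between $B$ and a boundary type $\mathbf{k}$ only guarantees some $C$ with $C_j = B_j \cup \{k\}$ and $C_i \in \{B_i,\{k\},B_i\cup\{k\}\}$ for $i\neq j$; nothing forces $G_C \supseteq G_B$, since coordinates may collapse to $\{k\}$. You need an additional maximality or careful choice argument to grow $B$ into a tree-type. In Stage~2, your geometric sketch for Elimination (``track a line segment and locate a simplex'') is the right intuition but is far from a proof; this is exactly the content of the relevant section of \cite{2007arXiv0706.2920A}, where the argument goes through the tropical hyperplane arrangement picture rather than a bare convexity statement. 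In Stage~3, surjectivity of $RD$ onto $H_{n-1,d}$ from the axioms alone is the main theorem of \cite{MR2820754}; ``iteratively applying Elimination to boundary types with multiplicities matching $v$'' does not work as stated, because Elimination gives you no control over \emph{which} coordinates receive the union, so you cannot steer the right-degree vector to a prescribed target. The actual argument in \cite{MR2820754} is an induction that is considerably more involved than a direct construction.
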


For example, in the triangulation of Figure~\ref{fig:d3d3-v01}, the $3$ simplices are encoded as the tree-types $(13,12),(123,2),(3,123)$. The tropical oriented matroid corresponding to the collection obtained by refining these types is $\{(13,12),(13,2),(13,1),(1,12),(3,12),(1,1),\ldots\}$.

A \newword{tope} is a type $A = (A_1,\ldots,A_n)$ such that all $A_i$ are singletons. A refinement $B$ of $A$ is called a \newword{total refinement} of $A$ if $B$ is a tope.

\begin{theorem}[\cite{2007arXiv0706.2920A}]
\label{thm:topestom}
The topes of a tropical oriented matroid $\OO$ completely determine it. To be precise,
$A = (A_1,\ldots,A_n)$ is in $\OO$ if and only if the following two conditions hold:
\begin{itemize}
\item $A$ satisfies the compatibility axiom with every tope of $\OO$. 
\item All of $A$'s total refinements are topes of M.
\end{itemize}
\end{theorem}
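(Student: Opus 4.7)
The forward direction is immediate from the axioms: if $A \in \OO$, then Comparability guarantees compatibility of $A$ with every other type in $\OO$, hence with every tope of $\OO$, and Surrounding guarantees every refinement of $A$ lies in $\OO$, so in particular every total refinement of $A$ is a tope of $\OO$.

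For the converse, my plan is to induct on the total size $|A| := \sum_{i=1}^n |A_i|$. The base case $|A|=n$ is exactly the case where $A$ is itself a tope; then $A$ is its own (and only) total refinement, and hypothesis (2) yields $A \in \OO$ directly.

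For the inductive step, suppose $|A_i| \geq 2$ for some $i$, pick any $a \in A_i$, and consider the two refinements
\[ A^{(a)} := (A_1, \ldots, A_{i-1}, \{a\}, A_{i+1}, \ldots, A_n), \qquad A' := (A_1, \ldots, A_{i-1}, A_i \setminus \{a\}, A_{i+1}, \ldots, A_n), \]
both of strictly smaller total size than $A$. The crucial step is to verify that each of $A^{(a)}$ and $A'$ inherits hypotheses (1) and (2) from $A$. For (2): any total refinement of $A^{(a)}$ or $A'$ has its $i$-th coordinate inside $A_i$ and its other coordinates inside the corresponding $A_j$, so it is also a total refinement of $A$, hence a tope of $\OO$. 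For (1): if either $A^{(a)}$ or $A'$ had a cycle with some tope $T$ of $\OO$ in the sense of Lemma~\ref{lem:typecomp}, then because the coordinates of the refinement are contained in those of $A$, the same choice of representatives would produce a cycle between $A$ and $T$, contradicting (1) for $A$. The inductive hypothesis then gives $A^{(a)}, A' \in \OO$, and I finish by invoking the Elimination axiom at position $i$: there exists $C \in \OO$ with $C_i = A^{(a)}_i \cup A'_i = \{a\} \cup (A_i \setminus \{a\}) = A_i$ and, since $A^{(a)}_j = A'_j = A_j$ for $j \neq i$, also $C_j \in \{A^{(a)}_j, A'_j, A^{(a)}_j \cup A'_j\} = \{A_j\}$. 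Hence $C = A$, and $A \in \OO$.

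The only genuinely nonroutine step is the inheritance of (1) and (2) by the refinements $A^{(a)}$ and $A'$; once that definition chase is carried out, the two pieces glue back together via a single application of Elimination to produce $A$ on the nose. As a byproduct, since every member of $\OO$ is a generic type, the argument shows that any $A$ satisfying (1) and (2) is automatically generic, so the statement does not need to assume genericity of $A$ in advance.
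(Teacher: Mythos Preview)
The paper does not supply its own proof of this theorem; it is quoted from Ardila--Develin \cite{2007arXiv0706.2920A} and used as a black box.  Your argument is correct and is essentially the standard one: the forward direction is immediate from Comparability and Surrounding, and for the converse the induction on $\sum_i |A_i|$ together with the observation that Elimination applied at position $i$ to $A^{(a)}$ and $A'$ is forced to return exactly $A$ (since all other coordinates already agree) is precisely the mechanism behind Lemma~\ref{lem:samecoordunion}, which the paper also quotes from the literature.  The inheritance of conditions (1) and (2) under refinement is indeed routine: any refinement of $A$ has its graph contained in $G_A$, so a cycle in $U(G_{A^{(a)}},G_T)$ or $U(G_{A'},G_T)$ would already be a cycle in $U(G_A,G_T)$, and total refinements of $A^{(a)}$ or $A'$ are \emph{a fortiori} total refinements of $A$.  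Your closing remark that genericity of $A$ comes for free is also correct, since Elimination forces $C=A\in\OO$ and every member of $\OO$ is generic; alternatively one can note directly that a cycle in $G_A$ would produce two incompatible total refinements of $A$, so condition (2) alone already rules out non-generic $A$.
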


The tropical oriented matroid corresponding to the triangulation of Figure~\ref{fig:d3d3-v01} has the following topes : $\{(1,1),(1,2),(2,2),(3,1),(3,2),(3,3)\}$.

A more natural way to think about $(n,d)$-types is to think in terms of mixed subdivisions of $n\Delta_{d-1}$. Via the Cayley trick, one can think of a triangulation of $\Delta_{n-1} \times \Delta_{d-1}$ as a fine mixed subdivision of $n \Delta_{d-1}$ \cite{SantosCayley}. An example of this is shown in Figure~\ref{fig:d3d3-v02}.

\begin{figure}[htbp]
	\centering
		\includegraphics[width=0.6\textwidth]{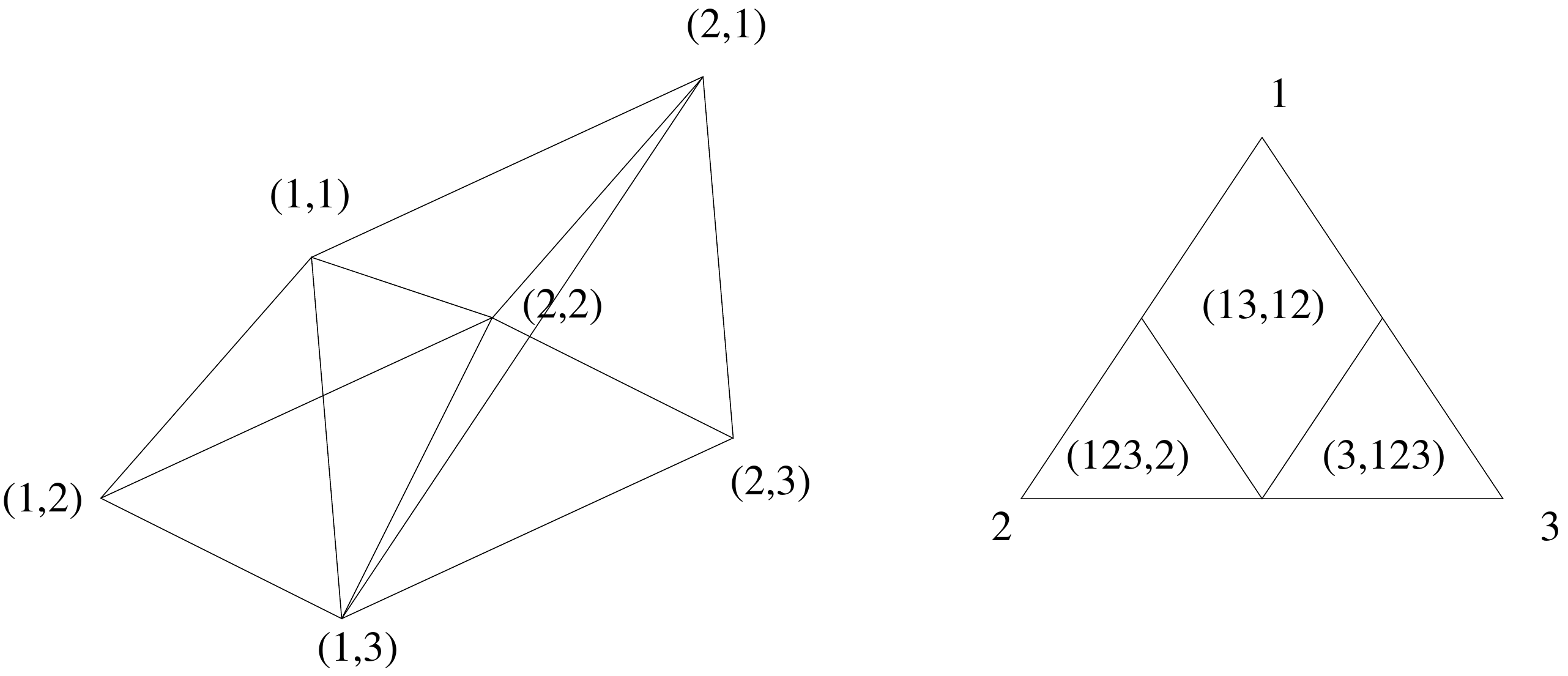}
	\caption{A triangulation of $\Delta_1 \times \Delta_2$, and the corresponding mixed subdivision of $2\Delta_2$.}
	\label{fig:d3d3-v02}
\end{figure}

\begin{definition}[\cite{Postnikov01012009}]
Let $r$ be the dimension of the Minkowski sum $P_1 + \cdots + P_n$. A \newword{Minkowski cell} in this sum is a polytope $B_1 + \cdots + B_n$ of dimension $r$ where $B_i$ is the convex hull of some subset of vertices of $P_i$. A \newword{mixed subdivision} of the sum is the decomposition into union of Minkowski cells such that intersection of any two cells is their common face. A mixed subdivision is \newword{fine} if there is no refinement possible.
\end{definition}

Let $e_1,\ldots,e_d$ be the coordinate vectors of $\RR^d$. We use $\Delta_I$ to denote the convex hull of $e_i$'s for $i \in I$. We are studying the fine mixed subdivision of $n\Delta_{[d]}$. Then for an $(n,d)$-type $A$, the polytope $\Delta_{A_1} + \cdots + \Delta_{A_n}$ is a fine mixed cell if and only if $G_A$ is a spanning tree of $K_{n,d}$ \cite{Postnikov01012009}.

Hence each tope $A = (\{a_1\},\ldots,\{a_n\})$ can be thought as a point $\Delta_{\{a_1\}} + \cdots + \Delta_{\{a_n\}}$ which is a integer lattice point of the dilated simplex $n\Delta_{[d]}$. The actual coordinate of this point $(b_1,\ldots,b_d)$ in $\RR^d$, where $b_i$ counts the number of times $i \in [d]$ occurs among $a_j$'s, will be denoted as $pos(A)$, the position of $A$.

\begin{lemma}
\label{lem:possame}
Let $A$ and $B$ be $(n,d)$-topes such that $pos(A) = pos(B)$. If they are compatible, then $A=B$.
\end{lemma}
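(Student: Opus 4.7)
The plan is to argue by contradiction using Lemma~\ref{lem:typecomp}. Write $A = (\{a_1\}, \ldots, \{a_n\})$ and $B = (\{b_1\}, \ldots, \{b_n\})$ with $a_i, b_i \in [d]$. The hypothesis $pos(A) = pos(B)$ says that the tuples $(a_1, \ldots, a_n)$ and $(b_1, \ldots, b_n)$ are equal as multisets. If $A \neq B$, the set $S = \{i \in [n] : a_i \neq b_i\}$ is nonempty; cancelling the positions where $A$ and $B$ agree shows that $\{a_i : i \in S\}$ and $\{b_i : i \in S\}$ coincide as multisets as well.

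The central construction is an auxiliary directed multigraph $G$ on vertex set $[d]$, with one directed edge $b_i \to a_i$ for each $i \in S$. Since $i \in S$ forces $a_i \neq b_i$, $G$ has no loops, and the multiset equality above is exactly the statement that the in-degree and out-degree at every vertex of $G$ coincide. Such a balanced digraph with at least one edge always decomposes into edge-disjoint directed cycles, so in particular $G$ contains a simple directed cycle. I extract distinct vertices $v_1, \ldots, v_k \in [d]$ with $k \geq 2$ and distinct indices $i_1, \ldots, i_k \in S$ satisfying $b_{i_\ell} = v_\ell$ and $a_{i_\ell} = v_{\ell+1}$ (indices mod $k$).

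The main obstacle is the delicate bookkeeping: Lemma~\ref{lem:typecomp} demands a cycle with distinct values $v_\ell$ \emph{and} distinct positions $i_\ell$, and its cyclic convention $i_\ell \in A_\ell \cap B_{\ell - 1}$ runs in the opposite sense from the cycle just produced. Simplicity of the directed cycle in $G$ provides the distinct $v_\ell$'s, while using different edges of $G$ provides the distinct $i_\ell$'s. To match the convention of the lemma, I reverse the cycle by relabeling old position $i_{k+1-\ell}$ as new position $\ell$. Under this relabeling, a direct index check shows $v_{k+2-\ell}$ lies simultaneously in the new $A_\ell$ (as $a_{i_{k+1-\ell}} = v_{k+2-\ell}$) and the new $B_{\ell-1}$ (as $b_{i_{k+2-\ell}} = v_{k+2-\ell}$), and the values $v_{k+2-\ell}$ for $\ell = 1, \ldots, k$ are a permutation of the distinct $v_1, \ldots, v_k$. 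Lemma~\ref{lem:typecomp} then yields a length-$k$ cycle between $A$ and $B$, contradicting compatibility. Hence $A = B$.
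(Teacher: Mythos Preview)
Your proof is correct and follows essentially the same approach as the paper: assume $A\neq B$, discard the coordinates where they agree, and use the remaining multiset equality to exhibit a cycle in the sense of Lemma~\ref{lem:typecomp}, contradicting compatibility. The paper's version is a two-line sketch (``follow the elements of the corresponding coordinates''), whereas you formalize the chase via the balanced auxiliary digraph and carefully match the cyclic indexing convention of Lemma~\ref{lem:typecomp}; this is added rigor rather than a different idea.
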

\begin{proof}
Delete the coordinates such that $A_i = B_i$. Assume for sake of contradiction that $A \not = B$ and $A$ and $B$ are not compatible. Starting from an arbitrary coordinate, we can find a cycle between $A$ and $B$, just by following the elements of the corresponding coordinates.
\end{proof}

\begin{lemma}
\label{lem:topesoftom}
Let $\OO$ be a tropical oriented matroid (with parameters $(n,d)$) and let $\hat{\OO}$ denote the set of topes of $\OO$. The map $A \rightarrow pos(A)$ gives a bijection between $\hat{\OO}$ and $H_{n,d}$.
\end{lemma}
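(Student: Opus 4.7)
The plan is to show injectivity and surjectivity of $pos \colon \hat{\OO} \to H_{n,d}$ separately. Injectivity falls out immediately from material already in place: by the Comparability axiom, any two topes in $\hat{\OO}$ are compatible, and Lemma~\ref{lem:possame} then forces $A=B$ whenever $pos(A)=pos(B)$. So the real content is surjectivity.

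For surjectivity, I would invoke the Cayley-trick picture recalled just above the lemma. The theorem identifying tree-types of $\OO$ with the maximal simplices of a triangulation of $\Delta_{n-1}\times\Delta_{d-1}$ translates, via Cayley, into the statement that the polytopes $\Delta_{B_1}+\cdots+\Delta_{B_n}$, as $B$ ranges over tree-types of $\OO$, form a fine mixed subdivision of $n\Delta_{[d]}$. Fix $p\in H_{n,d}$, viewed as a lattice point of $n\Delta_{[d]}$, and pick a tree-type $B\in\OO$ whose mixed cell contains $p$. The crux is then to write $p=e_{c_1}+\cdots+e_{c_n}$ with $c_i\in B_i$; once this is granted, the tope $A=(\{c_1\},\ldots,\{c_n\})$ is a total refinement of $B$, so the Surrounding axiom places $A$ in $\OO$, and by construction $pos(A)=p$, which finishes the argument.

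The main obstacle is the sublemma that any lattice point of a fine mixed cell $\Delta_{B_1}+\cdots+\Delta_{B_n}$ (with $G_B$ a spanning tree of $K_{n,d}$) is necessarily one of the lattice vertices $\sum_i e_{c_i}$ with $c_i\in B_i$. This is a unimodularity statement paralleling Lemma~\ref{lem:simplextree}: the mixed cell has normalized volume $1$, which rules out lattice points strictly in the interior or on a non-vertex face. I would either cite this directly from \cite{SantosCayley} and \cite{Postnikov01012009}, or prove it from scratch by rooting the spanning tree $G_B$ at an arbitrary vertex and peeling off leaves, inductively forcing any decomposition $p=\sum_i v_i$ with $v_i\in\Delta_{B_i}$ to have each $v_i$ be a vertex of $\Delta_{B_i}$. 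Once this sublemma is established, the surjectivity argument above completes the proof.
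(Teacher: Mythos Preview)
Your argument is correct. Injectivity is handled identically in both your proposal and the paper. For surjectivity, however, the paper takes a more direct combinatorial route: given a tree-type $B$ with right degree vector $a=(a_1,\ldots,a_d)$ and any $i\in[d]$, it explicitly constructs a tope $A$ refining $B$ with $pos(A)=a+e_i$, by setting $A_j=\{x_j\}$ where $\bar{x_j}$ is the first right vertex on the unique path in $G_B$ from $j$ to $\bar{i}$. Since Proposition~\ref{prop:triangfill} guarantees that the right degree vectors of tree-types exhaust $H_{n-1,d}$, and every point of $H_{n,d}$ is of the form $a+e_i$ for some $a\in H_{n-1,d}$, this covers everything.

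Your approach instead passes through the Cayley picture and reduces to a unimodularity sublemma about lattice points of the mixed cell $\Delta_{B_1}+\cdots+\Delta_{B_n}$. This is valid (the leaf-peeling induction you sketch works, though one should peel left leaves, where $|B_i|=1$ forces $v_i=e_{c_i}$ immediately), and arguably more conceptual. The paper's explicit construction, on the other hand, requires no auxiliary polytopal lemma and, more importantly, is reused verbatim in the proof of Corollary~\ref{cor:tombaseunion}: knowing \emph{which} tope sits at each base point $a+e_i$ (not merely that one exists) is exactly what is needed there to recover the tree from its base topes. So while your route establishes the lemma cleanly, the paper's hands-on construction pays dividends downstream.
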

\begin{proof}
The map being one-to-one follows from comparability and Lemma~\ref{lem:possame}. To show that the map is onto, we will show the following claim: Given a tree-$(n,d)$-type $B$ with right degree vector $a=(a_1,\ldots,a_d)$, for each $i \in [d]$, we can find a tope $A$ with $pos(A) = a + e_i$ by refining $T$. Combined with Proposition~\ref{prop:triangfill}, this claim is enough to conclude that the map is onto.

Let us first fix $i \in [d]$. For each $j \in [n]$, there exists a unique element $x_j \in B_j$ such that to go from a left vertex $j$ to right vertex $\hat{i}$ in $G_B$, one has to pass through the vertex $\hat{x_j}$. Set $A_j = \{x_j\}$ for all $j \in [n]$ to get a tope $A$. Then all elements of $[d]$ except for $i$ occurs exactly once among $B_j \setminus A_j$'s. Hence we may conclude that $pos(A) = RD(B) + e_i$, which proves the claim.
\end{proof}

The left image in Figure~\ref{fig:msub-topes-v01} gives the set of topes in a tropical oriented matroid with parameters $(2,3)$. The right image emphasizes the fact that we get a bijection between those topes and points of $H_{2,3}$ by the map $A \rightarrow pos(A)$.

\begin{figure}[htbp]
	\centering
		\includegraphics[width=0.4\textwidth]{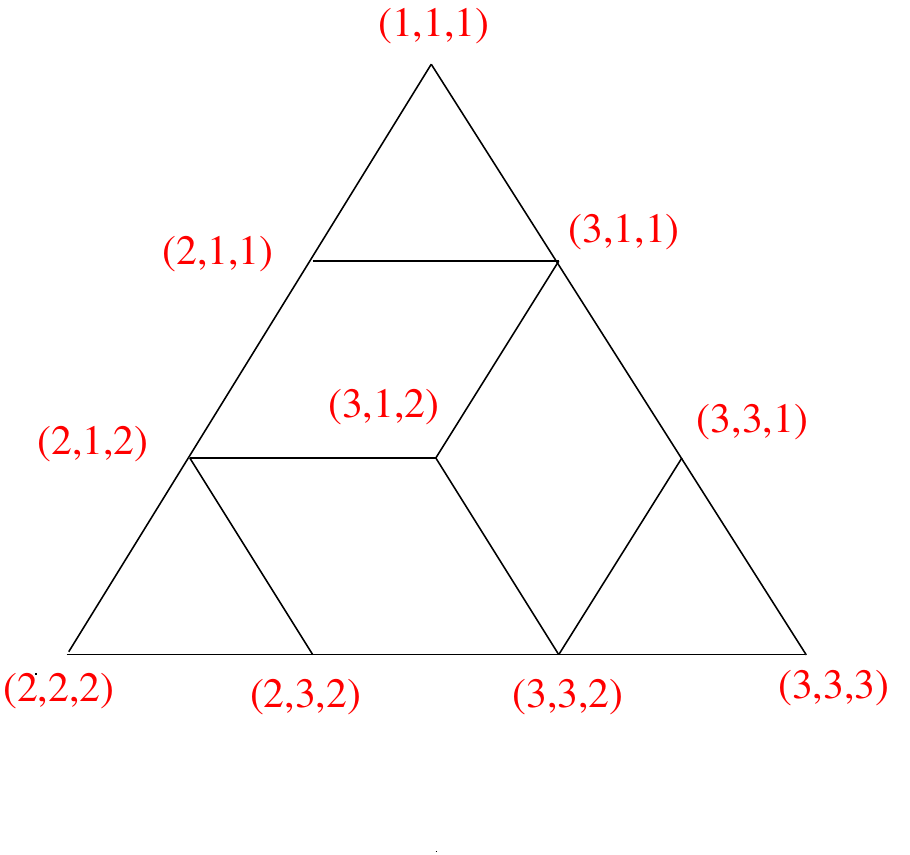}
		\includegraphics[width=0.4\textwidth]{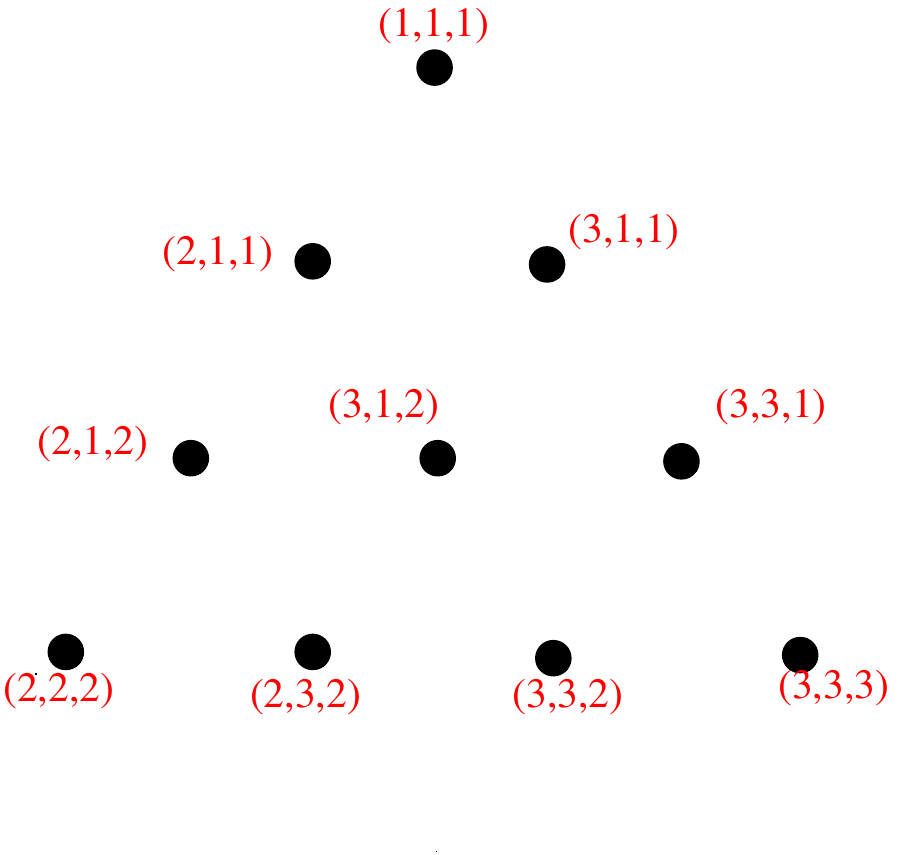}
	\caption{Set of topes of a tropical oriented matroid, and their bijection with $H_{4,3}$.}
	\label{fig:msub-topes-v01}
\end{figure}

For each element $a=(a_1,\ldots,a_d)$ of $H_{n-1,d}$, we can associate a simplex in $H_{n,d}$ consisting of vertices $a+e_i$ for each $i \in [d]$. We call such simplex a \newword{unit lattice simplex} at $a$, and denote the points $a+e_i$ as \newword{base points} of the simplex. Moreover, for a simplex $Q_T$ of a triangulation of $\Delta_{n-1} \times \Delta_{d-1}$, we denote the base points of the unit lattice simplex at $RD(T)$ as the base points of $Q_T$. We denote a tope that is positioned at a base point of $Q_T$ as a \newword{base tope} of $T$.


\begin{corollary}
\label{cor:tombaseunion}
Let $\OO$ be a tropical oriented matroid (with parameters $(n,d)$) and let $\hat{\OO}$ denote the set of topes of $\OO$. Fix $a \in H_{n-1,d}$ and consider the unit lattice simplex at $a$. Let $A^1,\ldots,A^d$ be the topes corresponding to the base points of the unit lattice simplex. Set $B$ to be the $(n,d)$-type obtained by taking the union of $A^1,\ldots,A^d$ (In other words, for each $j \in [n]$, $B_j = \cup_{k=1}^n A_j^k$). Then $B$ describes a spanning tree which has right-degree vector equal to $a$.
\end{corollary}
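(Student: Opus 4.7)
The plan is to identify $B$ with the unique tree-type of $\OO$ whose right-degree vector is $a$. Since the tree-types of $\OO$ form a triangulation of $\Delta_{n-1}\times\Delta_{d-1}$, Proposition~\ref{prop:triangfill} says that $T \mapsto RD(T)$ is a bijection from the tree-types of $\OO$ onto $H_{n-1,d}$, so there is a unique tree-type $B'$ in $\OO$ with $RD(B') = a$. It then suffices to check that $B_j = B'_j$ for every $j \in [n]$.

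First I would show $B_j \subseteq B'_j$ by proving that each base tope $A^i$ is a refinement of $B'$. To do this, apply to $B'$ the construction used in the proof of Lemma~\ref{lem:topesoftom}: for each $i \in [d]$, define a tope $\tilde{A}^{(i)}$ by setting $\tilde{A}^{(i)}_j = \{x_j^{(i)}\}$, where $x_j^{(i)}$ is the unique element of $B'_j$ lying on the path from $j$ to $\bar{i}$ in the spanning tree $G_{B'}$. The Surrounding axiom places $\tilde{A}^{(i)}$ in $\OO$, and the calculation in that proof gives $pos(\tilde{A}^{(i)}) = RD(B') + e_i = a + e_i = pos(A^i)$. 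Comparability guarantees that $A^i$ and $\tilde{A}^{(i)}$ are compatible, and Lemma~\ref{lem:possame} then forces $A^i = \tilde{A}^{(i)}$, exhibiting $A^i$ as a refinement of $B'$. Taking coordinate-wise unions yields $B_j = \bigcup_i A^i_j \subseteq B'_j$.

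For the reverse inclusion $B'_j \subseteq B_j$, fix $j$ and pick any $y \in B'_j$. Then $(j,\bar{y})$ is an edge of $G_{B'}$, so the path from $j$ to $\bar{y}$ consists of this single edge. Running the construction above with $i = y$ therefore forces $x_j^{(y)} = y$, whence $A^y_j = \{y\}$ and $y \in B_j$. This gives $B = B'$, and $B'$ is a spanning tree with right-degree vector $a$ by choice. The single place where genuine work happens is the identification $A^i = \tilde{A}^{(i)}$, which relies on the uniqueness-of-topes-at-each-lattice-point chain coming from Lemmas~\ref{lem:possame} and \ref{lem:topesoftom}; everything else reduces to elementary bookkeeping on paths in $G_{B'}$, so once those lemmas are in hand the corollary is short.
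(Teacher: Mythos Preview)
Your proof is correct and follows essentially the same strategy as the paper: both identify $B$ with the unique tree-type $B'$ (the paper calls it $T$) of $\OO$ having right-degree vector $a$, and both obtain the containment $G_B \subseteq G_{B'}$ by invoking the construction from the proof of Lemma~\ref{lem:topesoftom} together with the uniqueness in Lemma~\ref{lem:possame} to recognize each base tope $A^i$ as a refinement of $B'$.

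The only difference lies in how equality $B = B'$ is closed off. The paper argues geometrically: the Minkowski sum $\Delta_{B_1}+\cdots+\Delta_{B_n}$ contains the $d$ affinely independent points $pos(A^1),\ldots,pos(A^d)$, hence is $(d-1)$-dimensional, forcing $G_B$ to be spanning and connected; a connected spanning subgraph of the tree $G_{B'}$ must be all of $G_{B'}$. You instead give a direct combinatorial check of the reverse containment $B'_j \subseteq B_j$, observing that for $y\in B'_j$ the edge $(j,\bar{y})$ is itself the path from $j$ to $\bar{y}$, so the Lemma~\ref{lem:topesoftom} construction with $i=y$ places $y$ in $A^y_j$. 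Your argument is a bit more elementary and self-contained, avoiding the Minkowski-sum dimension step; the paper's version is terser but leans on the mixed-subdivision picture. Either route is perfectly fine here.
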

\begin{proof}
From the claim inside the proof of Lemma~\ref{lem:topesoftom}, it is enough to show that $B$ is a spanning tree. Since $\Delta_{B_1} + \cdots + \Delta_{B_n}$ has to contain the points corresponding to $A^j$'s, it has to be $d-1$-dimensional. For this to happen, $G_B$ has to be spanning $K_{n,d}$ and be connected. Moreover, $G_B$ has to be contained in the spanning tree $T$ which corresponds to the simplex having right degree vector given by $a$. Therefore, $B$ has to equal $T$, which is a spanning tree of $K_{n,d}$.

\end{proof}

Let us look at Figure~\ref{fig:msub-trees}. In the right image, the tree-types are placed according to their right-degree vector. If we look at the base topes $(3,1,1),(3,1,2),(3,3,2)$, which corresponds to the base points of a simplex at $(1,0,1)$, their union is the type $(3,13,12)$, which is exactly the tree-type of the tropical oriented matroid having right degree vector $(1,0,1)$.

\begin{figure}[htbp]
	\centering
		\includegraphics[width=0.4\textwidth]{msub-topes-v01}
		\includegraphics[width=0.4\textwidth]{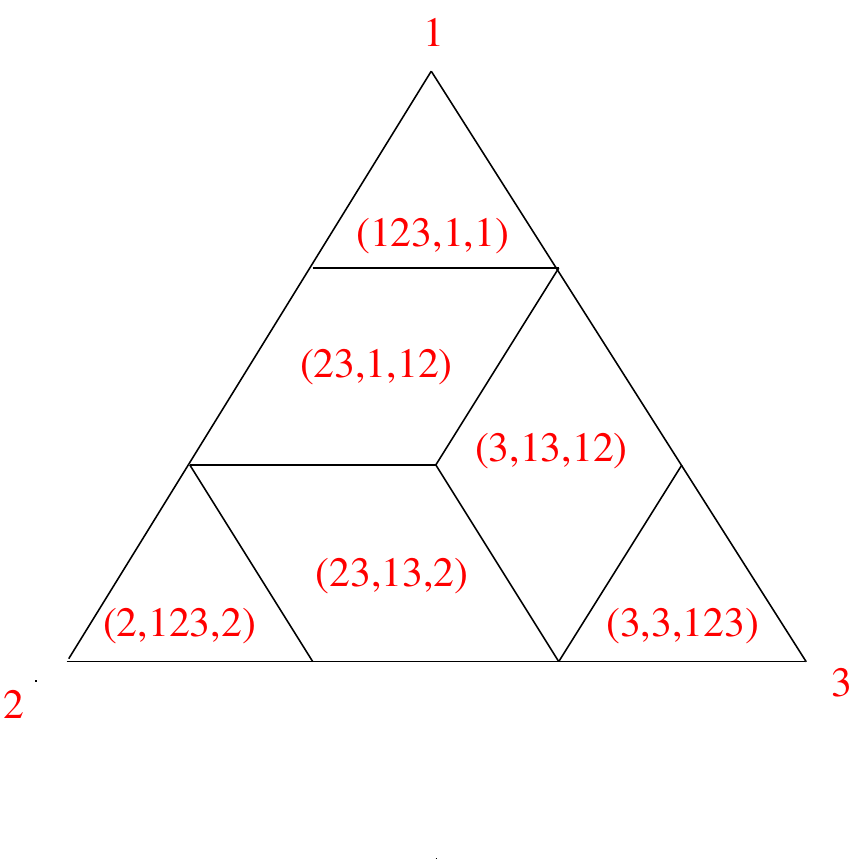}
	\caption{The set of topes of a tropical oriented matroid, and the set of tree-types of the same tropical oriented matroid.}
	\label{fig:msub-trees}
\end{figure}



Let $S$ be a collection of $(n,d)$-topes. We say that $S$ satisfies the \newword{tope-linkage-property}, if for any tope $P$ in $S$ and any $q \in [d]$ such that $P \not = (q,\ldots,q)$, there is a tope $P'$ in $S$ that is obtained from $P$ by replacing some element $t \not = q$ with $q$ in some coordinate of $P$. In such case, we express this as $P \stackrel{-t+q}\rightarrow P'$.


\begin{lemma}
\label{lem:topelink}
Let $\OO$ be a tropical oriented matroid (with parameters $(n,d)$) and let $\hat{\OO}$ denote the set of topes of $\OO$. Then $\hat{\OO}$ satisfies the tope-linkage-property.
\end{lemma}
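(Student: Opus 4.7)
My plan is to exhibit $P'$ by refining a carefully chosen tree-type of $\OO$. First I will pick any coordinate $j_0$ of $P$ with $p_{j_0} = t$ and $t \neq q$ (such $j_0$ exists because $P \neq (q,\ldots,q)$); this gives $pos(P)_t \geq 1$. By Proposition~\ref{prop:triangfill} there is then a unique tree-type $T \in \OO$ with $RD(T) = pos(P) - e_t$, and the explicit construction inside the proof of Lemma~\ref{lem:topesoftom} identifies $P$ as the base tope of $T$ at the point $RD(T)+e_t = pos(P)$. In particular $P$ refines $T$, so $p_k \in T_k$ for every $k$.

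The heart of the argument will be to locate the right coordinate to flip using the tree structure of $T$. Since $q \neq t$, there is a unique non-trivial path in $T$ from $\bar q$ to $\bar t$. I will take $k^*$ to be the left vertex adjacent to $\bar q$ on this path; then the edge $(k^*,\bar q)$ lies in $T$, which gives $q \in T_{k^*}$. The remainder of the path runs from $k^*$ to $\bar t$ without revisiting $\bar q$, so the first right vertex that $k^*$ sees along it is some $\bar y$ with $y \neq q$. By the recipe in the proof of Lemma~\ref{lem:topesoftom}, this $y$ is precisely $p_{k^*}$, so $p_{k^*} \neq q$. This tree-theoretic observation is the main (and I think only) nontrivial step in the proof.

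With $k^*$ in hand, I will set $P' := (P_1,\ldots,P_{k^*-1},\{q\},P_{k^*+1},\ldots,P_n)$. Then $P'$ is a refinement of $T$: for $k \neq k^*$ we have $P'_k = \{p_k\} \subseteq T_k$ because $P$ already refines $T$, and for $k = k^*$ we have $\{q\} \subseteq T_{k^*}$ by the previous paragraph. The Surrounding axiom then gives $P' \in \OO$, and $P'$ is a tope since each coordinate is a singleton. By construction $P'$ differs from $P$ only at coordinate $k^*$, where the value $p_{k^*} \neq q$ has been replaced by $q$, which is exactly the move $P \stackrel{-p_{k^*}+q}{\rightarrow} P'$ demanded by the tope-linkage-property.
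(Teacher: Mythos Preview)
Your argument is correct, and it takes a genuinely different route from the paper. The paper argues axiomatically: it repeatedly applies the Elimination axiom between $P$ and the boundary tope $Q=(q,\ldots,q)$, each time refining to a tope strictly closer to $P$, until the distance drops to $1$. Your proof instead goes through the tree structure. You locate, via Proposition~\ref{prop:triangfill} and the bijection of Lemma~\ref{lem:topesoftom}, the unique tree-type $T$ with $RD(T)=pos(P)-e_t$; the uniqueness of the tope at position $pos(P)$ forces $P$ to be exactly the base tope of $T$ built in the proof of Lemma~\ref{lem:topesoftom}, so $p_k$ is the first right vertex on the $T$-path from $k$ to $\bar t$. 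Reading off the $T$-path from $\bar q$ to $\bar t$ then hands you a coordinate $k^*$ with $q\in T_{k^*}$ and $p_{k^*}\neq q$, and a single application of Surrounding finishes.

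What each buys: the paper's proof is shorter and uses only the raw axioms (Boundary, Elimination, Surrounding), with no appeal to the already-established triangulation picture. Your proof is non-iterative and fully constructive --- it names the swapped coordinate explicitly in terms of a tree path --- and it makes transparent why the linkage move stays inside $\OO$: the new tope is still a refinement of the same tree-type $T$. One small remark: the element you end up removing is $p_{k^*}$, which need not be the $t$ you fixed at the outset; that is harmless since the $t$ in the tope-linkage-property is existential, and you note this correctly in your final line.
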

\begin{proof}
Let $P$ be a tope such that $P \not = Q := (q,\ldots,q)$. Define $dist(P,Q)$, the distance between two topes $P,Q$ to be the number of coordinates such that $P_i \not = Q_i$. If $dist(P,Q) = 1$, there is nothing left to show. Hence assume that $dist(P,Q) > 1$. 

We use the exchange axiom between $P$ and $Q^0 :=Q$ and then use refinement to find a tope $Q^1 \not = P,Q^0$ such that $Q^1_i$ is either equal to $P_i$ or $\{q\}$. Then $dist(P,Q^1) < dist(P,Q^0)$. If $dist(P,Q^1) = 1$, there is nothing left to show. In the other case, repeat the same procedure for $P$ and $Q^1$ to get a new tope $Q^2$. Repeat this until we get some $Q^k$ with $dist(P,Q^k)=1$. This process has to end in finite steps, since the distance goes down by at least $1$ every time.
\end{proof}

Take a look at Figure~\ref{fig:msub-topes-v01}. Let $P$ be the tope $(3,1,2)$ and choose $q$ to be $2$, since $(3,1,2) \not = (2,2,2)$. We can find a tope $(2,1,2)$, to check that the linkage-property holds. We can express this as $(3,1,2) \stackrel{-3+2}\rightarrow (2,1,2)$.



Just like matroids, we can define restriction, contraction and dual operations on tropical oriented matroids, to get another tropical oriented matroid.

\begin{proposition}[\cite{2007arXiv0706.2920A}]
Let $\OO$ be a tropical oriented matroid with parameters $(n,d)$. Pick any set of coordinates for $I \subseteq [n]$. Then the \newword{restrction} $\OO|_{I,[d]}$, which consists of all types of $\OO$ by deleting coordinates $i\not \in I$, is also a tropical oriented matroid. Pick any set of directions $J \subseteq [d]$. Then the \newword{contraction} $\OO|_{[n],J}$, which consists of all types of $\OO$ which do not contain elements outside of $J$ in any coordinate, is also a tropical oriented matroid. The \newword{minor} $\OO_{I,J}$, which consists of all types of $\OO$ by deleting coordinates $i \not \in I$ and choosing the types that only use elements of $J$, is also a tropical oriented matroid.
\end{proposition}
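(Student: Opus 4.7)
The plan is to verify the four axioms (Boundary, Elimination, Comparability, Surrounding) in each of the three cases, and to note that the minor $\OO_{I,J}$ is the composition of a restriction and a contraction, so once the first two are established the third is immediate. Throughout, one must also verify that the types produced remain generic, but this is easy because refinements and restrictions of a type only shrink the edge set of the underlying bipartite graph, and a subgraph of a forest is a forest.

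For the restriction $\OO|_{I,[d]}$, Boundary is automatic: the subtuple of $\mathbf{j}=(j,\ldots,j)$ indexed by $I$ is again $(j,\ldots,j)$. Comparability transfers because by Lemma~\ref{lem:typecomp} a cycle between the restricted types $A|_I$ and $B|_I$ lifts verbatim to a cycle between $A$ and $B$, contradicting their compatibility in $\OO$. Elimination: given $A', B' \in \OO|_{I,[d]}$ lifted from $A, B \in \OO$, apply the elimination axiom in $\OO$ at a position $j \in I$ and then restrict the resulting type $C$ back to $I$; the coordinate conditions survive restriction. The one step that needs a small trick is Surrounding: if $B'$ is a refinement of $A' = A|_I$ for some $A \in \OO$, define an extension $\tilde B$ by $\tilde B_k = B'_k$ for $k \in I$ and $\tilde B_k = A_k$ for $k \notin I$; then $\tilde B$ is a refinement of $A$ in the full type system, hence lies in $\OO$ by Surrounding applied to $\OO$, and its restriction to $I$ is $B'$.

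For the contraction $\OO|_{[n],J}$, the underlying tuples have the same length and are unchanged, so most axioms are nearly formal. Boundary: for each $j \in J$, the type $(j,\ldots,j)$ lies in $\OO$ and uses only $j \in J$. Comparability is inherited directly from $\OO$. Surrounding: if $A \in \OO|_{[n],J}$ and $B$ refines $A$, then $B_k \subseteq A_k \subseteq J$ for every $k$, so $B$ lies in the contraction. Elimination is the key check: given $A, B \in \OO|_{[n],J}$ the output $C$ of elimination in $\OO$ satisfies $C_j = A_j \cup B_j \subseteq J$ and $C_k \in \{A_k, B_k, A_k \cup B_k\} \subseteq 2^J$, so $C$ stays inside the contraction.

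Finally, $\OO_{I,J} = (\OO|_{I,[d]})|_{[n],J}$ (read with the appropriate indexing), so it inherits the axioms from the two previous cases. The main obstacle I anticipate is the Surrounding axiom for restriction, which is the only place a nontrivial construction is needed: one must produce a candidate type in $\OO$ whose restriction is the prescribed refinement, and checking that this extension is itself a refinement of the original lifted type (and remains generic) is where the argument actually has content. Elimination for the restriction is morally the same kind of lift-and-restrict argument, and the contraction side is routine because the ambient coordinates and graph do not change.
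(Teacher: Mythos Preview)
The paper does not give its own proof of this proposition: it is quoted with a citation to Ardila--Develin \cite{2007arXiv0706.2920A}, so there is nothing to compare against beyond noting that your axiom-by-axiom verification is exactly the standard argument one would find in that reference.

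Your proposal is correct. The only places that require any care are the ones you already flagged. For Surrounding in the restriction, your extension $\tilde B$ with $\tilde B_k = B'_k$ for $k\in I$ and $\tilde B_k = A_k$ otherwise is indeed a refinement of $A$ (each coordinate is contained in the corresponding $A_k$), hence lies in $\OO$; and $G_{\tilde B}\subseteq G_A$ guarantees genericity. For Elimination in the restriction, it is worth saying explicitly that the restricted output $C|_I$ is still generic because $G_{C|_I}$ is an induced subgraph of the forest $G_C$; you allude to this in your opening remark but it is the one place someone might pause. The contraction checks are, as you say, routine since the ambient type is unchanged, and the minor follows by composition.
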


Given a type $A$, we denote the type obtained from $A$ by deleting coordinates $i \not \in I$, and deleting the elements not contained in $J$, as $A|_{I,J}$.

Let $\OO$ be a tropical oriented matroid encoding a triangulation of $\Delta_{n-1} \times \Delta_{d-1}$. This triangulation induces a triangulation of $\Delta_I \times \Delta_J$. The tropical oriented matroid encoding this induced triangulation is the minor $\OO|_{I,J}$ of $\OO$.

Let us look at the tropical oriented matroid $\OO$ represented in Figure~\ref{fig:msub-trees}. Given a type $(3,1,2)$, its restriction to $\{1,2\}$ is $(3,1,2)|_{\{1,2\},[3]} = (3,1)$. The minor $\OO_{\{1,2\},\{1,3\}}$ consists of types $\{(1,1),(3,1),(3,3),(3,13),(13,1)\}$.


\begin{definition}
A \newword{semitype} (with parameters $(n,d)$) is given by an n-tuple of subsets of $[d]$, not
necessarily nonempty. Given a tropical oriented matroid $\OO$, its completion $\tilde{\OO}$ consists of all semitypes which result from types of $\OO$ by changing some subset of the coordinates to the empty
set. Given a collection of semitypes, its \newword{reduction} consists of all honest types contained in the collection.
\end{definition}
\begin{definition}
Let $A$ be a semitype (with parameters $(n,d)$). Then the transpose $A^T$ of $A$, a semitype with parameters $(d,n)$ ( i.e. a $d$-tuple of subsets of $[n]$), has $i \in A^T_j$ whenever $j \in A_i$.
\end{definition}

The transpose of the type $(2,1,1)$ is the semitype given by $(23,1,\emptyset)$.

\begin{theorem}[\cite{2007arXiv0706.2920A},\cite{MR2957992},\cite{MR2820754}]
Let $\OO$ be a tropical oriented matroid. Then the dual of $\OO$, which is denoted as $\OO^T$, is the reduction of the collection of semitypes given by transposes of semitypes in $\tilde{\OO}$, which is also a tropical oriented matroid.
\end{theorem}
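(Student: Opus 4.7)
The plan is to verify the four axioms of a generic tropical oriented matroid for $\OO^T$. The Surrounding and Comparability axioms go through mostly mechanically, since transposition is just the left-right swap of the bipartite graph $K_{n,d}$, which preserves both the subgraph relation and the cycle structure used in Lemma~\ref{lem:typecomp}. Concretely, a refinement $B'$ of a type $B = A^T \in \OO^T$ lifts to a semitype $A' \subseteq A$ coordinatewise; applying the Surrounding axiom of $\OO$ to any $C \in \OO$ whose coordinate-emptying yields $A$, and then emptying the appropriate coordinates of the resulting refinement, shows $A' \in \tilde{\OO}$ and hence $B' \in \OO^T$. Similarly, any incompatibility cycle between two transposed semitypes would lift to an incompatibility cycle between their parent types in $\OO$, contradicting the Comparability axiom of $\OO$.

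For the Boundary axiom I must show $(\{i\},\ldots,\{i\}) \in \OO^T$ for every $i \in [n]$. This amounts to producing a type $C \in \OO$ with $C_i = [d]$, so that emptying all coordinates other than $i$ produces a semitype in $\tilde{\OO}$ whose transpose is the desired boundary type. My plan is to build $C$ by iteratively applying the Elimination axiom of $\OO$ at position $i$ to the boundary types $\mathbf{1},\ldots,\mathbf{d}$; at each step, genericity of $\OO$ forces the non-$i$ coordinates of the resulting type to remain singletons (otherwise a cycle through $i$ would appear), so the process stays inside $\OO$ and terminates at a type with all of $[d]$ in position $i$.

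The main obstacle is the Elimination axiom. In $\OO^T$ it must be applied at a position $j \in [d]$, which under transposition corresponds to a \emph{direction} in $\OO$ rather than a position, so there is no direct pullback to the Elimination axiom of $\OO$. My plan is to bypass this via the bijection with triangulations: $\OO$ determines a triangulation $\T$ of $\Delta_{n-1} \times \Delta_{d-1}$, and the factor-swap isomorphism $\Delta_{n-1} \times \Delta_{d-1} \cong \Delta_{d-1} \times \Delta_{n-1}$ carries $\T$ to a triangulation $\T'$, which by the cited bijection is encoded by a generic tropical oriented matroid $\OO'$ with parameters $(d,n)$. The tree-types of $\OO'$ are, by direct inspection of the swap, exactly the transposes of the tree-types of $\OO$, and these already lie in $\OO^T$. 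To conclude $\OO^T = \OO'$, I will match the two sides using Theorem~\ref{thm:topestom} together with the base-tope description of Corollary~\ref{cor:tombaseunion}, exhibiting the topes of $\OO^T$ as transposes of appropriate semitypes in $\tilde{\OO}$ and matching them with the topes of $\OO'$ via the bijection of Lemma~\ref{lem:topesoftom}. Once $\OO^T = \OO'$ is established, Elimination for $\OO^T$ is inherited from $\OO'$, completing the verification of all four axioms.
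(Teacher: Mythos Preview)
The paper does not prove this theorem: it is stated with citations to Ardila--Develin, Horn, and Oh--Yoo, and no argument is given. So there is no ``paper's own proof'' to compare against; the result is imported.

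On your attempt itself: the Boundary, Surrounding, and Comparability sketches are essentially fine. The iterated elimination for Boundary does force the non-$i$ coordinates to stay singleton, for the reason you give (a repeated pair in two coordinates creates a $4$-cycle through $i$).

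The genuine soft spot is your Elimination argument. You want to identify $\OO^T$ with the tropical oriented matroid $\OO'$ of the swapped triangulation, but your plan to do so via Theorem~\ref{thm:topestom} and Corollary~\ref{cor:tombaseunion} is underspecified. The inclusion $\OO' \subseteq \OO^T$ is straightforward from the tree-type match plus Surrounding. The reverse inclusion is the issue: given an arbitrary $B \in \OO^T$, you must place it inside $\OO'$, and Theorem~\ref{thm:topestom} only helps once you already know that every \emph{tope} of $\OO^T$ is a tope of $\OO'$. That step needs an extra ingredient you do not state, namely that every type of $\OO$ is a refinement of some tree-type of $\OO$ (equivalently, every face of the triangulation lies in a maximal simplex). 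With that in hand, a tope $P \in \OO^T$ has $P^T \in \tilde{\OO}$ arising from some $C \in \OO$, $C$ refines a tree-type $T$, hence $G_P \subseteq G_T$, so $P$ refines $T^T \in \OO'$ and Surrounding in $\OO'$ finishes. You should make this explicit; as written, the matching of topes is asserted rather than argued.

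One more caution: your Elimination strategy invokes the bijection between generic tropical oriented matroids and triangulations. That bijection is itself a nontrivial cited result, and you should check that its proof in the references does not already rely on duality; otherwise your argument is circular. In fact the original proofs do not, so you are safe, but this deserves a sentence.
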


Let $\OO$ be a tropical oriented matroid encoding a triangulation of $\Delta_{n-1} \times \Delta_{d-1}$. This triangulation can also be thought of as a triangulation of $\Delta_{d-1} \times \Delta_{n-1}$. The tropical oriented matroid encoding that triangulation, is the dual of $\OO$.

We end the section with 2 lemmas that will be needed for the main proof. Given two $(n,d)$-types $A$ and $B$, we define their \newword{union} to be a type $C$ where $C_i = A_i \cup B_i$ for all $i \in [n]$.

\begin{lemma}[\cite{MR2957992}]
\label{lem:samecoordunion}
Let $\OO$ be a tropical oriented matroid and $j$ be some element of $[d]$. Let $A$ and $B$ be types of $\OO$ such that $A_i = B_i$ for all $i \not = j$. Then the union of $A$ and $B$ is also a type of $\OO$.
\end{lemma}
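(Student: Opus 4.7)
The plan is to invoke the Elimination axiom directly. Applied to $A$, $B$, and the distinguished position $j$, Elimination produces a type $C \in \OO$ with $C_j = A_j \cup B_j$ and $C_k \in \{A_k, B_k, A_k \cup B_k\}$ for every $k \in [n]$. The hypothesis that $A_k = B_k$ whenever $k \neq j$ forces all three elements of this set to coincide, so $C_k = A_k = B_k$ for every $k \neq j$. Together with $C_j = A_j \cup B_j$, this makes $C$ precisely the union of $A$ and $B$ as defined in the paragraph preceding the lemma, and the axiom already places it in $\OO$.

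In other words, the lemma is the degenerate case of Elimination in which $A$ and $B$ differ in only one coordinate: the flexibility that Elimination normally leaves in the other positions collapses, and the axiom pinpoints a single candidate, which must be the union. No induction on a distance function (as in the proof of Lemma~\ref{lem:topelink}), no appeal to Comparability, Surrounding, or the tope-level characterization (Theorem~\ref{thm:topestom}), and no cycle analysis in the spirit of Lemma~\ref{lem:typecomp} is required.

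The one minor point worth flagging is notational: the statement names $j$ as an element of $[d]$, but for $A_j$ and $B_j$ to make sense as the differing coordinates of $n$-tuples of subsets of $[d]$, the index must live in $[n]$. I read $j \in [n]$ throughout and apply Elimination at that coordinate. With this reading there is no combinatorial obstacle, and the proof is essentially a one-line invocation of the Elimination axiom.
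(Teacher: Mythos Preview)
Your argument is correct: once $A_k = B_k$ for all $k \neq j$, the Elimination axiom applied at position $j$ produces a type $C \in \OO$ that is forced, coordinate by coordinate, to equal the union of $A$ and $B$, so the union lies in $\OO$. Your remark that the index $j$ should live in $[n]$ rather than $[d]$ is also on target; with that reading the invocation of Elimination is immediate.

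The paper itself does not prove this lemma; it simply records the statement with a citation to Horn~\cite{MR2957992}. So there is no in-paper argument to compare against. Your one-line reduction to the Elimination axiom is a correct and fully self-contained substitute for that external reference, and nothing more elaborate is required.
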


\begin{lemma}
\label{lem:diffcoordunion}
Let $\OO$ be a tropical oriented matroid and $q,t$ be some elements of $[d]$. Let $B$ and $C$ be types of $\OO$, such that $B$ can be obtained from $C$ by deleting all occurances of $q$ in $C$, and then by adding $t$'s to some coordinates. Then the union of $B$ and $C$ is also a type in $\OO$.

\end{lemma}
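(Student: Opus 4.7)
The plan is to induct on the number of coordinates at which $D := B \cup C$ differs from $C$. First I would unpack the hypothesis: for each $i$, $B_i$ equals either $C_i \setminus \{q\}$ or $(C_i \setminus \{q\}) \cup \{t\}$, so $D_i = B_i \cup C_i$ equals $C_i$ except at those coordinates $i$ where $t$ was inserted in constructing $B$ and $t \notin C_i$, in which case $D_i = C_i \cup \{t\}$. Let $S$ denote this set of ``newly added $t$'' coordinates, so $D$ is obtained from $C$ by adding $t$ to the positions in $S$, and the goal becomes to show $D \in \OO$ by induction on $|S|$.

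Before inducting, I would check that $D$ is a valid generic type by verifying that $G_D = G_C \cup \{(i,\bar{t}) : i \in S\}$ is acyclic. Any cycle in $G_D$ must pass through $\bar{t}$ and use at least one of the newly added edges $(i,\bar{t})$; since each such edge also lies in $G_B$, one can reroute a hypothetical cycle either through $\bar{q}$ (producing a cycle in $G_C$) or avoiding $\bar{q}$ entirely (producing a cycle in $G_B$), contradicting genericity of $C$ or $B$.

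For the base case $|S|=0$ the claim is trivial since $D = C \in \OO$. For the inductive step I would pick some $i_0 \in S$ and apply the Elimination axiom to $B$ and $C$ at position $i_0$, obtaining a type $E \in \OO$ with $E_{i_0} = B_{i_0} \cup C_{i_0} = D_{i_0}$ and $E_k \in \{B_k, C_k, D_k\}$ elsewhere. Using the Surrounding axiom to refine $E$, I would construct an intermediate type $B' \in \OO$ that stands in the same relation to $C$ as $B$ does, but with one fewer ``newly added $t$'' coordinate. The induction hypothesis applied to $B'$ and $C$ then places the type $D'$, obtained from $C$ by adding $t$ at the coordinates in $S \setminus \{i_0\}$, inside $\OO$; a single application of Lemma~\ref{lem:samecoordunion} to $D'$ and a refinement of $B$ that agrees with $D'$ outside $i_0$ and contains $\{t\}$ at $i_0$ would then complete the induction.

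The main obstacle is the weak control the Elimination axiom affords over coordinates $k \neq i_0$; in particular, at any coordinate $k$ with $C_k = \{q\}$ we must have $B_k = \{t\}$, so $B_k \cap C_k = \emptyset$ and no common refinement of $B$ and $C$ can exist at $k$. In such cases Lemma~\ref{lem:samecoordunion} cannot be applied naively. I would handle this by peeling off these forced coordinates in a separate preliminary step (at them $D_k = B_k \cup C_k$ is already realized as a same-coordinate union) before carrying out the main induction, or else invoke the tope-linkage property Lemma~\ref{lem:topelink} to build the desired intermediate type by a controlled walk through topes of $\OO$.
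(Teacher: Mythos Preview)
Your approach is axiomatic and inductive, quite different from the paper's, which gives a short geometric argument via the mixed-subdivision model: if no maximal cell contained both $B$ and $C$, some wall $\sum_{i\in I} x_i = c$ would separate them, and this is ruled out because $B$ and $C$ differ only in the directions $q$ (appearing only in $C$) and $t$ (appearing only in $B$).

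Your plan, however, has a concrete gap in its final step. You propose to apply Lemma~\ref{lem:samecoordunion} to $D'$ together with ``a refinement of $B$ that agrees with $D'$ outside $i_0$.'' But a refinement of $B$ is a coordinatewise \emph{subset}, and at every coordinate $k$ with $q \in C_k$ one has $q \in D'_k$ while $q \notin B_k$; since in fact $B_k \subseteq D'_k$ for all $k \neq i_0$, this forces $B_k \subsetneq D'_k$, so no refinement of $B$ can equal $D'_k$ there. This obstruction is not confined to the ``forced'' coordinates with $C_k=\{q\}$ that you single out; it occurs at \emph{every} coordinate where $q$ appears in $C$. The same issue spoils your construction of $B'$ from the eliminated type $E$: after Elimination at $i_0$ you only know $E_k \in \{B_k, C_k, D_k\}$, and if $E_k = C_k$ at some $k \in S\setminus\{i_0\}$ then $B_k = (C_k\setminus\{q\})\cup\{t\} \not\subseteq C_k$ (because $t\notin C_k$ for $k\in S$), so you cannot refine $E_k$ down to $B_k$. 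Neither of your suggested workarounds addresses this $q$-obstruction, and the induction does not close as written.
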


\begin{proof}
We will show that there is a cell which contains both $B$ and $C$. Assume for the sake of contradiction that there is no such cell. This means that there is some hyperplane $\sum_{i \in I} x_i = c$ which contains the common refinement of $B$ and $C$, but separates $B$ and $C$. This is impossible, since $q$ only appears in $C$ and $t$ only appears in $B$.

\end{proof}

\section{Matching Ensemble}

In this section, we will define matching ensembles. To do so, we will borrow the notion of \newword{matching fields} and \newword{linkage axiom} which was used in \cite{MF}. Given two sets $A$ and $B$ of equal cardinality, the \newword{matching} is a bijection between $A$ and $B$. We think of this as a bipartite graph, with left vertex set $A$ and right vertex set $B$, and edge set given by the set of edges $(a,\pi(a))$ where $\pi$ is the bijection between $A$ and $B$. Although the matching fields used in \cite{MF} only concerns $d$-by-$d$ matchings, we extend the definition and look at matchings of all sizes.


\begin{definition}
We say that a collection $\M$ of matchings between subsets of $[n]$ and subsets of $[d]$ forms a \newword{matching field} (with parameters $(n,d)$) if it satisfies the following two axioms:
\begin{itemize}
\item There is exactly one matching for any pair $I \subseteq [n], J \subseteq [d]$ such that $|I|=|J|$.
\item Let $M$ be a matching between $I$ and $J$. Let $M'$ be a matching obtained by taking a subgraph of $M$. Then $M'$ is also in $\M$.
\end{itemize}

If a matching field $\M$ also satisfies the following axioms, we call it a \newword{matching ensemble}:

\item(left linkage) Let $M$ be a matching between $I$ and $J$ in $\M$. Pick any $v \in L \setminus [n]$. Then there is an edge $(i,j) \in M$ that we can replace with $(v,j)$ to get another matching $M'$ in $\M$. 

\item(right linkage) Let $M$ be a matching between $I$ and $J$ in $\M$. Pick any $v \in R \setminus [d]$. Then there is an edge $(i,j) \in \M$ that we can replace it with $(i,v)$ to get another matching $M'$ in $\M$. 

\end{definition}

We will note here that in \cite{MF}, the term \newword{linkage axiom}, a combination of the left linkage and right linkage axioms, is used instead. The reason we split the linkage axiom in this paper, is to use duality in the proofs. An example of a matching ensemble is given in Figure~\ref{fig:ME-ex}.

\begin{figure}[htbp]
	\centering
		\includegraphics[width=0.65\textwidth]{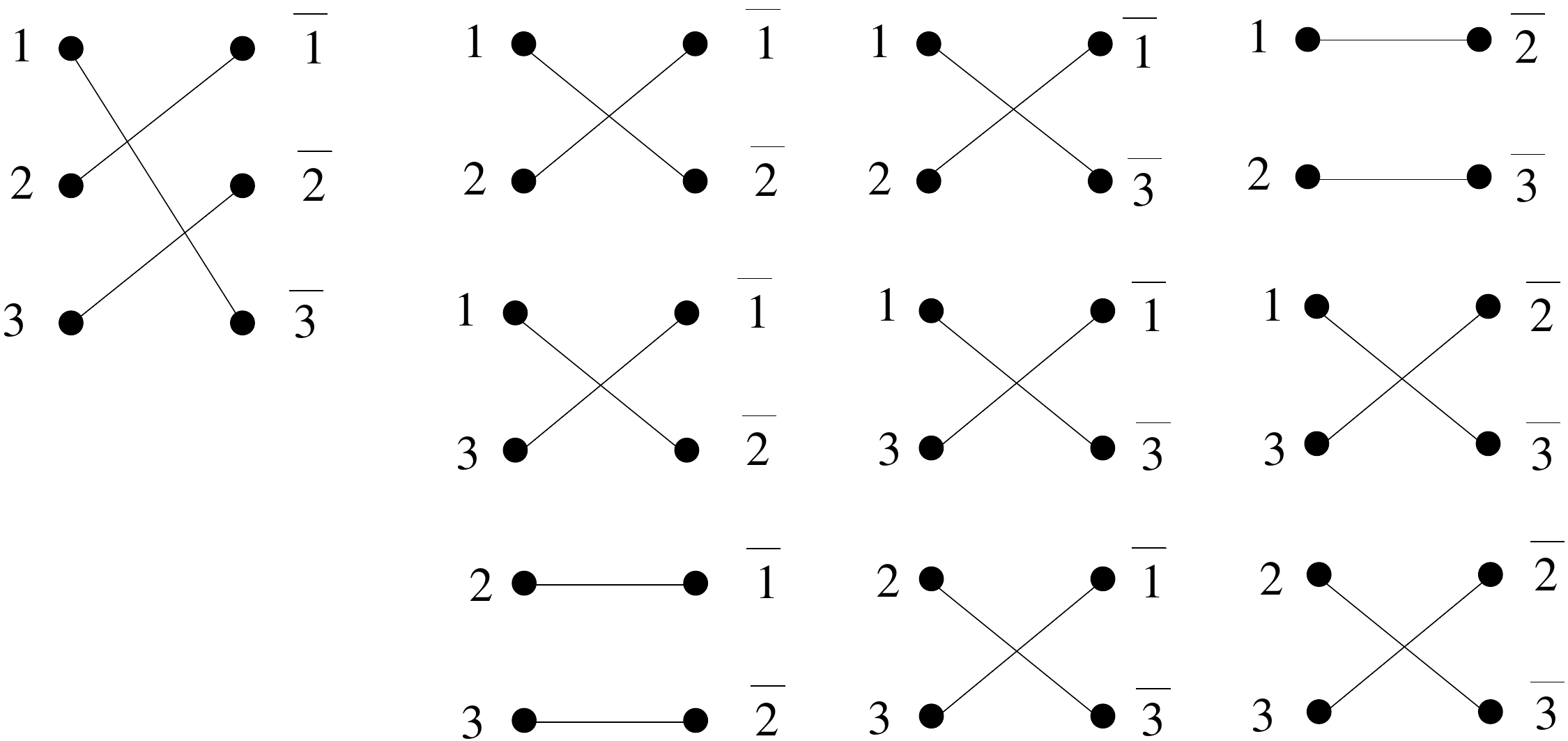}
	\caption{A matching ensemble.}
	\label{fig:ME-ex}
\end{figure}

Let $\M$ be a matching with parameters $(n,d)$. By swapping $[n]$ and $[d]$, we get a matching field $\M'$ with parameters $(d,n)$, which we call the \newword{dual} of $\M$. It is easy to see that $\M$ is a matching ensemble if and only if $\M'$ is a matching ensemble.

We introduce an \newword{extraction method} that extracts a collection of matchings from an $(n,d)$-tropical oriented matroid $\OO$. Let $I$ be any subset of $[n]$ and $J$ be any subset of $[d]$ such that $|I| = |J|$. Take the minor $\OO|_{I,J}$. Lemma~\ref{lem:topesoftom} tells us that there is a unique tope $A$ in $\OO$ such that $pos(A) = (1,\ldots,1)$. We call such tope a \newword{central tope} of the tropical oriented matroid $\OO|_{I,J}$. This tope gives a matching between $I$ and $J$ via $G_A$. If we extract a matching for all pairs $(I \subseteq [n],J \subseteq [d])$ such that $|I|=|J|$, we get a matching field (with parameters $(n,d)$). We denote this matching field coming from the tropical oriented matroid $\OO$ as $\M_{\OO}$.

Since $(\OO|_{I,J})^T = \OO^T|_{J,I}$ for any tropical oriented matroid $\OO$:

\begin{lemma}
\label{lem:MEdual}
Let $\OO$ be a tropical oriented matroid. Then $\M_{\OO^T}$ is the dual matching field of $\M_{\OO}$.
\end{lemma}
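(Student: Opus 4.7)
The plan is to exploit the identity $(\OO|_{I,J})^T = \OO^T|_{J,I}$ that precedes the lemma, which reduces everything to a statement in the square case: for a tropical oriented matroid $\mathcal{P}$ with parameters $(k,k)$, the central tope of $\mathcal{P}$ and the central tope of $\mathcal{P}^T$ encode the same bipartite matching (viewed from opposite sides). Once this square-case statement is established, unpacking the extraction method gives the lemma.

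Fix $I \subseteq [n]$, $J \subseteq [d]$ with $|I|=|J|=k$, and set $\mathcal{P} = \OO|_{I,J}$. By Lemma~\ref{lem:topesoftom}, $\mathcal{P}$ has a unique central tope $A$, i.e.\ a tope with $pos(A) = (1,\ldots,1)$. Because every coordinate of $A$ is a singleton and each element of $J$ appears exactly once among the $A_i$, we can write $A = (\{\pi(i)\})_{i \in I}$ for some bijection $\pi : I \to J$. Its transpose $A^T$ has $A^T_j = \{\pi^{-1}(j)\}$ for each $j \in J$, and hence is itself an honest $(k,k)$-type — in fact a tope with $pos(A^T) = (1,\ldots,1)$. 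Since $A \in \mathcal{P} \subseteq \tilde{\mathcal{P}}$, the definition of the dual places $A^T$ in the reduction of the transposed completion, so $A^T \in \mathcal{P}^T$. By uniqueness of the central tope, $A^T$ is the central tope of $\mathcal{P}^T$.

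Now apply the identity $\mathcal{P}^T = (\OO|_{I,J})^T = \OO^T|_{J,I}$. Then $A^T$ is the central tope of $\OO^T|_{J,I}$, so the matching that $\M_{\OO^T}$ assigns to the pair $(J,I)$ is $G_{A^T}$. But $G_A$ has edge set $\{(i,\pi(i)) : i \in I\}$ and $G_{A^T}$ has edge set $\{(j,\pi^{-1}(j)) : j \in J\}$; these are the same bipartite graph with the two sides interchanged, which is precisely the effect of the dualization operation on matching fields. Doing this for every pair $(I,J)$ shows that $\M_{\OO^T}$ is the dual of $\M_{\OO}$.

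There is no substantial obstacle here: the hinted identity collapses the general case into the square case, and the only point requiring a moment's care is checking that transposing a central tope produces an honest tope (not merely a semitype with some empty coordinate), which is guaranteed by $pos(A) = (1,\ldots,1)$.
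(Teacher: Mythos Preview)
Your argument is correct and follows exactly the approach the paper intends: the paper's proof is just the one-line observation $(\OO|_{I,J})^T = \OO^T|_{J,I}$, and you have faithfully expanded this into a complete argument by verifying that the transpose of the central tope of $\OO|_{I,J}$ is an honest tope and hence the central tope of $\OO^T|_{J,I}$.
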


Using this lemma, we will show that $\M_{\OO}$ is a matching ensemble.

\begin{proposition}
\label{prop:tomtome}
If we use the extraction method on an $(n,d)$-tropical oriented matroid, we get a matching ensemble.
\end{proposition}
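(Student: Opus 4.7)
The plan is to check all four defining axioms of a matching ensemble for $\M_{\OO}$: one matching per pair of equal cardinality, closure under sub-matchings, right linkage, and left linkage. The first two are structural and follow almost immediately from the central-tope description, while the linkage axioms are where the tropical structure is really used. Axiom one is essentially built into the extraction method, since the central tope of $\OO|_{I,J}$ exists and is unique by Lemma~\ref{lem:topesoftom}. For sub-matching closure, let $M = G_A$ where $A$ is the central tope of $\OO|_{I,J}$, and let $M'$ be a sub-matching between $I' \subseteq I$ and $J' \subseteq J$. The condition that $M'$ is a matching between $I'$ and $J'$ forces $A_i \in J'$ for each $i \in I'$, so the restriction $A|_{I',J'}$ is a well-defined tope of $\OO|_{I',J'}$ at position $(1,\ldots,1) \in H_{|I'|,|J'|}$. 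By the uniqueness of topes at a given position (Lemma~\ref{lem:possame}), $A|_{I',J'}$ is the central tope of $\OO|_{I',J'}$ and its graph is exactly $M'$, so $M' \in \M_{\OO}$.

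For right linkage, given $M = G_A$ a matching between $I$ and $J$ with central tope $A$, and $v \in [d] \setminus J$, I would pass to the enlarged minor $\OO' := \OO|_{I,\, J \cup \{v\}}$, which is itself a tropical oriented matroid and in which $A$ is still a tope (its position has a $0$ at coordinate $v$ and a $1$ at each value of $J$). Since $A \ne (v,\ldots,v)$, applying the tope-linkage property (Lemma~\ref{lem:topelink}) inside $\OO'$ with $q = v$ produces a tope $A''$ that agrees with $A$ except in exactly one coordinate $i \in I$, where $A''_i = v$ and $A_i = \{t\}$ for some $t \in J$. The position of $A''$ in $\OO'$ then has a $0$ at $t$, a $1$ at $v$, and $1$s elsewhere. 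The central tope $B$ of $\OO|_{I,\, (J \setminus \{t\}) \cup \{v\}}$ is also a tope of $\OO'$ at this very same position, so by Lemma~\ref{lem:possame} applied inside $\OO'$ we conclude $A'' = B$. Hence $G_{A''} = (M \setminus \{(i,t)\}) \cup \{(i,v)\}$ belongs to $\M_{\OO}$, which is exactly right linkage.

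Left linkage follows from the right linkage case applied to the dual $\OO^T$: its associated matching ensemble $\M_{\OO^T}$ is the dual of $\M_{\OO}$ by Lemma~\ref{lem:MEdual}, and the right linkage axiom in the dual translates directly to the left linkage axiom in the original. The hardest part will be the right-linkage step, specifically certifying that the single coordinate swap produced by tope-linkage lands on the central tope of the correct enlarged minor; this is handled by the choice of the intermediate TOM $\OO|_{I, J \cup \{v\}}$ together with the uniqueness of topes at a fixed position (Lemma~\ref{lem:possame}), which lets us identify the output of tope-linkage with the desired central tope without any extra work.
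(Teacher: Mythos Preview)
Your proof is correct and follows essentially the same approach as the paper's: reduce left linkage to right linkage via duality (Lemma~\ref{lem:MEdual}), and derive right linkage from the tope-linkage property (Lemma~\ref{lem:topelink}) applied inside a minor containing both $J$ and the new direction $v$, then identify the resulting tope with the central tope of the appropriate minor by positional uniqueness. The only cosmetic differences are that the paper works in the larger minor $\OO|_{I,[d]}$ rather than your $\OO|_{I,J\cup\{v\}}$, and that you verify the sub-matching axiom explicitly whereas the paper tacitly absorbs it into the assertion that the extraction yields a matching field.
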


\begin{proof}
It is enough to show that the matching field we get from an arbitrary tropical oriented matroid satisfies the right-linkage axiom. This is due to the following reasoning : Let $\OO$ be a tropical oriented matroid. If we can show that the right-linkage axiom holds for $\M_{\OO^T}$, it implies that the left-linkage axiom holds for $\M_{\OO}$.


Let us look at a matching between $I$ and $J$ where $I \subseteq [n],J \subseteq [d]$ and $|I| = |J|$. This matching corresponds to the central tope $T$ of the tropical oriented matroid $\OO_{I,J}$. Now consider the minor $\OO_{I,[d]}$. $T$ has to be a tope inside this tropical oriented matroid. Let $v$ be an element of $J^c$. Via the tope-linkage-property, there exists some tope $T'$ and $t \in [d]$ such that $T \stackrel{-t+v}\rightarrow T'$. This is the unique central tope of the minor $\OO_{I,J \setminus \{t\} \cup \{v\}}$. Hence the right-linkage axiom holds for a matching field coming from an arbitrary tropical oriented matroid.
\end{proof}

We define the $(I,J)$-\newword{minor}, where $I \subseteq [n]$ and $J \subseteq [d]$, of a matching ensemble $\M$ (with parameters $(n,d)$) to be the collection of all matchings between subsets of $I$ and subsets of $J$ in $\M$. We denote this minor as $\M_{I,J}$, and this collection is also indeed a matching ensemble. Since each matching inside $\M$ is obtained by studying a minor of $\OO$, we get the following result: 

\begin{lemma}
\label{lem:MEminor}
Let $\OO$ be a tropical oriented matroid and $\M$ be a matching ensemble such that $ext(\OO) = \M$. Then $ext(\OO|_{I,J}) = \M|_{I,J}$.
\end{lemma}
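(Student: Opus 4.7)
The plan is to show that the two matching ensembles $ext(\OO|_{I,J})$ and $\M|_{I,J}$ agree as matching fields, and since a matching field is determined by the single matching it assigns to each pair of equal-cardinality subsets, it suffices to compare these assignments pair-by-pair. So I would fix arbitrary $I' \subseteq I$ and $J' \subseteq J$ with $|I'| = |J'|$ and show that both matching ensembles produce the same matching on this pair.

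Unwinding definitions, the matching that $\M|_{I,J}$ assigns to $(I',J')$ is, by definition, the matching that $\M = ext(\OO)$ assigns to $(I',J')$; by the extraction method, this is the bipartite graph $G_A$ of the unique central tope $A$ of the minor $\OO|_{I',J'}$. On the other side, the matching that $ext(\OO|_{I,J})$ assigns to $(I',J')$ is, again by the extraction method, $G_{A'}$ where $A'$ is the unique central tope of $(\OO|_{I,J})|_{I',J'}$.

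Thus the whole statement reduces to verifying the functoriality identity
\[
(\OO|_{I,J})|_{I',J'} \;=\; \OO|_{I',J'},
\]
which I would check directly from the definition: the minor $\OO|_{I,J}$ is obtained by first restricting types to the coordinates in $I$ and then keeping only those that use letters inside $J$; applying $|_{I',J'}$ to the result simply restricts further to coordinates $I' \subseteq I$ and keeps types using letters inside $J' \subseteq J$, which is exactly the prescription for $\OO|_{I',J'}$. Because the two tropical oriented matroids are literally equal, they have the same (necessarily unique) central tope, so the extracted matchings coincide.

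There is no genuine obstacle in this proof: it is essentially the observation that the extraction procedure is local, depending only on the relevant minor of $\OO$. The only thing requiring care is keeping the two meanings of ``minor'' (for tropical oriented matroids versus for matching ensembles) distinct and confirming that restriction in the left coordinate commutes with contraction in the right coordinate, which is immediate from their set-theoretic descriptions.
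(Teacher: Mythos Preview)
Your argument is correct and matches the paper's own justification, which consists of the single sentence preceding the lemma: ``Since each matching inside $\M$ is obtained by studying a minor of $\OO$, we get the following result.'' You have simply made that remark precise by reducing to the iterated-minor identity $(\OO|_{I,J})|_{I',J'}=\OO|_{I',J'}$, which is exactly the content the paper is invoking.
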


Now we show that we get different matching ensembles from different tropical oriented matroids.

\begin{lemma}
\label{lem:extuniq}
Let $\OO$ be a tropical oriented matroid and let $\M$ be a matching ensemble such that $ext(\OO) = \M$. If $\OO'$ is a tropical oriented matroid such that $ext(\OO') = \M$, we have $\OO = \OO'$.
\end{lemma}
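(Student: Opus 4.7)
The plan is to induct on $n+d$, using Theorem~\ref{thm:topestom} and Lemma~\ref{lem:topesoftom} to reduce the problem to showing that for each $b \in H_{n,d}$ the unique topes $A \in \OO$ and $A' \in \OO'$ with $pos(A) = pos(A') = b$ coincide. The base cases $n = 1$ or $d = 1$ are trivial because the tropical oriented matroid is then uniquely determined. In the inductive step, Lemma~\ref{lem:MEminor} together with the inductive hypothesis gives $\OO|_{I,J} = \OO'|_{I,J}$ whenever $|I| < n$ or $|J| < d$.

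If some $b_j = 0$, both $A$ and $A'$ lie in the contractions $\OO|_{[n], J^*}$ and $\OO'|_{[n], J^*}$ with $J^* = \{j : b_j > 0\} \subsetneq [d]$, which are equal by the inductive hypothesis; uniqueness of the tope with a given position then forces $A = A'$. Otherwise $b$ is strictly positive; assume for contradiction that $A \neq A'$ and put $S = \{i : A_i \neq A'_i\}$. Since $pos(A) = pos(A')$ and $A, A'$ agree off $S$, a short count shows that $A|_S$ and $A'|_S$ have the same position. If $|S| < n$, the inductive hypothesis yields $\OO|_{S, [d]} = \OO'|_{S, [d]}$ and Lemma~\ref{lem:topesoftom} forces $A|_S = A'|_S$, contradicting the definition of $S$. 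Hence $S = [n]$, i.e., $A$ and $A'$ disagree in every coordinate.

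The remaining case is the heart of the argument, and the step I expect to be the main obstacle; I handle it by a cycle argument. Form the directed graph $G$ on vertex set $[d]$ with an edge $A_i \to A'_i$ for every $i \in [n]$; $G$ has no loops (since $A_i \neq A'_i$) and at each vertex $j$ the in-degree and out-degree both equal $b_j > 0$, so $G$ contains a directed cycle $j_1 \to j_2 \to \cdots \to j_k \to j_1$ with $k \geq 2$ and distinct $j_\ell$. For each edge $(j_\ell, j_{\ell+1})$ of the cycle choose some $i_\ell \in [n]$ with $(A_{i_\ell}, A'_{i_\ell}) = (j_\ell, j_{\ell+1})$; distinctness of the $j_\ell$ forces distinctness of the $i_\ell$. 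Setting $I = \{i_1, \ldots, i_k\}$ and $J = \{j_1, \ldots, j_k\}$, both $A|_I$ and $A'|_I$ use only elements of $J$ and are bijections $I \to J$, so each is the unique tope of position $(1, \ldots, 1)$, hence the central tope, of its respective minor $\OO|_{I, J}$ and $\OO'|_{I, J}$; but they are related by a nontrivial cyclic shift, so they differ. Therefore the matching indexed by $(I, J)$ is different in $\M_{\OO}$ and $\M_{\OO'}$, contradicting $\M_{\OO} = \M_{\OO'}$. This contradiction yields $A = A'$, and Theorem~\ref{thm:topestom} completes the argument.
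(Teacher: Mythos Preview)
Your proof is correct, and its heart---the directed-graph/cycle argument extracting a pair $(I,J)$ on which $A$ and $A'$ give two distinct matchings---is exactly the paper's argument. The paper, however, dispenses with the induction and the case analysis entirely: it simply picks topes $A\in\OO$, $B\in\OO'$ with $pos(A)=pos(B)$ and $A\neq B$, crosses out the coordinates where they agree, and runs the cycle argument on what remains. Your preliminary cases (some $b_j=0$; $|S|<n$) are handled automatically once you form the edge $A_i\to A'_i$ only for $i\in S$ rather than for all $i\in[n]$: the graph is then loop-free and balanced on the support of $pos(A|_S)$, so a cycle exists with no further hypotheses. The inductive wrapper and the appeal to Lemma~\ref{lem:MEminor} are thus correct but unnecessary.
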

\begin{proof}
Assume for the sake of contradiction that $\OO \not = \OO'$. Due to Theorem~\ref{thm:topestom} and Lemma~\ref{lem:topesoftom}, there are topes $A \in \OO$ and $B \in \OO'$ such that $pos(A) = pos(B)$ and $A \not = B$. Since $pos(A) = pos(B)$, $B$ can be obtained from $A$ by permuting the coordinates. After crossing out all coordinates such that $A_i = B_i$, we can find a cycle between $A$ and $B$. Let $I$ be the set of coordinates that is involved in this cycle, and let $J$ be the set of elements of $[d]$ involved in this cycle. Then $A|_{I,J}$ and $B|_{I,J}$ each describe a different matching between $I$ and $J$, where both of them have to be in $\M$. Hence we get a contradiction.
\end{proof}

\section{Matching Ensembles and triangulation of $\Delta_{n-1} \times \Delta_{d-1}$}

In the previous section, we have shown that we can get a matching ensemble from a triangulation of $\Delta_{n-1} \times \Delta_{d-1}$. In this section, we show the other direction, that one can construct the triangulation back from the given matching ensemble.

We first start out with several tools.

Let $P$ be an $(n,d)$-tope and $T$ be a tree-$(n,d)$-type where $n \leq d$. We use $supp(P)$ to denote the set of elements of $[d]$ that occur at least once in $P$.

\begin{lemma}
\label{lem:t1}
If there is a length $n$ cycle between $T$ and $P$ that is of minimal length, then there can't be an element $i$ of  $supp(P)$ that occurs more than $2$ times in $T$. And not all elements of $supp(P)$ can occur twice in $T$.
\end{lemma}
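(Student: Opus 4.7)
The plan is to work entirely inside the directed graph $U(G_T,G_P)$ from Lemma~\ref{lem:typecomp}. After the relabeling of $[n]$ that witnesses the length-$n$ cycle, write $P_s = \{p_s\}$ for each $s\in[n]$; the cycle condition forces $p_{s-1}\in T_s$ for every $s$ (indices read modulo $n$, so $p_0 := p_n$), and the elements $p_1,\dots,p_n$ are all distinct, so $|\mathrm{supp}(P)| = n$. In $U(G_T,G_P)$ the given cycle is the closed directed walk
\[
C_0:\ \bar{p_n}\to n\to \bar{p_{n-1}}\to n-1\to\cdots\to \bar{p_1}\to 1\to \bar{p_n}
\]
of directed length $2n$, with $P$-edges oriented right-to-left and $T$-edges oriented left-to-right.

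For the first statement, I would assume for contradiction that some $p_k$ appears at least three times in $T$. Since $p_k\in T_{k+1}$ is already forced by $C_0$, there are two further distinct coordinates $a\neq b$ in $[n]\setminus\{k+1\}$ with $p_k\in T_a\cap T_b$; because $a\neq b$, at least one of them, say $a$, is also distinct from $k$. I then splice the extra $T$-edge $(a,\bar{p_k})$ into $C_0$: starting at $\bar{p_k}$, follow $C_0$ forward (via the $P$-edge $(\bar{p_k},k)$, then $(k,\bar{p_{k-1}})$, and so on) until first reaching the left vertex $a$, then close up with the new edge $(a,\bar{p_k})$. The resulting walk is still alternating, so by Lemma~\ref{lem:typecomp} it is a type cycle between $T$ and $P$ of type-length $k-a+1$ (if $a<k$) or $k+n-a+1$ (if $a>k+1$). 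The exclusions $a\neq k$ and $a\neq k+1$ are exactly what is needed to pin this value inside $\{2,\dots,n-1\}$, contradicting the minimality of the length-$n$ cycle.

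For the second statement, I would use a degree count on the spanning tree. If every element of $\mathrm{supp}(P)$ occurred exactly twice in $T$, the $n$ right vertices $\bar{p_1},\dots,\bar{p_n}$ would contribute $2n$ to the total right-degree of $G_T$, which equals $|E(T)| = n+d-1$. The remaining $d-n$ right vertices would then have total degree only $d-n-1$, contradicting the fact that each of them must have degree at least $1$ in the spanning tree $T$.

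The main obstacle is the shortcut case analysis in the first statement: one must check that the spliced walk is again alternating (so Lemma~\ref{lem:typecomp} applies), is strictly shorter than $n$ in type-length, and has type-length at least $2$ (so that it qualifies as a type cycle at all). The boundary cases $k=n$ with $a=1$ and $a=k+1$ must be ruled out by the choice of $a$, which is precisely why the \emph{third} occurrence of $p_k$ in $T$ (providing two extra coordinates, not just one) is essential.
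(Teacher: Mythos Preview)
Your argument is correct and is essentially the paper's own proof. The paper takes the same shortcut idea: after relabelling so that $p_{s+1}\in T_s$ and $p_1\in T_n$, a third occurrence $p_1\in T_j$ with $j\neq 1,n$ immediately yields a length-$j$ type cycle, and the second claim is the identical edge count $\sum |T_s| = n+d-1 < 2n + (d-n)$. Your version differs only cosmetically---you phrase the shortcut as splicing an extra $T$-edge into the directed cycle $C_0$ in $U(G_T,G_P)$ and you keep $k$ general rather than setting $k=1$ by symmetry, which forces you to track the two boundary exclusions $a\neq k$ and $a\neq k+1\pmod n$ explicitly; the paper's WLOG hides exactly the same two exclusions as ``$j\neq 1,n$''.
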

\begin{proof}
Without loss of generality, we may assume that $p_2 \in T_1,\ldots,p_n \in T_{n-1},p_1 \in T_1$ where we use $p_i$ to denote the lone element of $P_i$. For sake of contradiction, assume $p_1$ occurs at least $3$ times in $T$. Which means there is some $j \not =1,n$ such that $p_1 \in T_j$. This implies that there is a length $j < n$ cycle between $T$ and $P$.

For the second claim, assume for the sake of contradiction that all elements of $supp(P)$ occur twice in $T$. This implies $|T_1| + \cdots + |T_n|$ is at least $2n + (d-n) = n + d$, from which we get a contradiction since $G_T$ has to be a tree of $K_{n,d}$, and has exactly $n+d-1$ edges.

\end{proof}


\begin{lemma}
\label{lem:t2}
Assume there is a length $n$ cycle between $T$ and $P$ that is of minimal length. Let $q$ be an element of $[d]$ that occurs at least twice in $T$. Then $P'$ and $T$ are also incompatible, where $P'$ is obtained from $P$ via the linkage, in a way that $P \stackrel{-t+q}\rightarrow P'$. Moreover, if the minimal length of cycle between $T$ and $P'$ is also $n$, then $t$ can occur at most once in $T$.
\end{lemma}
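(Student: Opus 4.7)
My proof will exhibit explicit alternating cycles in the auxiliary graph $U(G_T, G_{P'})$, leveraging the cyclic structure of the given minimal cycle between $T$ and $P$. After the relabeling of Lemma~\ref{lem:t1}, I assume $p_{i+1} \in T_i$ cyclically (indices mod $n$), where $p_i$ is the unique element of $P_i$ and $p_1, \dots, p_n$ are distinct, and let $j_0$ be the unique coordinate with $p_{j_0} = t$, so $P'_{j_0} = \{q\}$ and $P'_i = P_i$ for $i \ne j_0$.

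For the first assertion (incompatibility of $T$ and $P'$), since $q$ occurs at $\ge 2$ positions of $T$ I can pick $j^\ast \ne j_0$ with $q \in T_{j^\ast}$. Then the alternating walk
\[ j^\ast \xrightarrow{T} \bar q \xrightarrow{P'} j_0 \xrightarrow{T} \overline{p_{j_0+1}} \xrightarrow{P'} j_0+1 \xrightarrow{T} \cdots \xrightarrow{T} \overline{p_{j^\ast}} \xrightarrow{P'} j^\ast \]
uses the $T$-edge $j^\ast \to \bar q$, the $P'$-edge $\bar q \to j_0$, and then follows the edges of the original minimal cycle. Its left vertices $j^\ast, j_0, j_0+1, \dots, j^\ast - 1$ are all distinct; if the right vertices are too, this is a simple alternating cycle of length $2((j^\ast - j_0) \bmod n + 1) \ge 4$, and otherwise $q$ coincides with some $p_m$ appearing in the arc, in which case the closed walk still contains a shorter simple sub-cycle. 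Either way $T$ and $P'$ are incompatible.

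For the moreover clause, the key observation is that the minimality of the $T,P$-cycle at length $n$ forces $T_i \cap \{p_1, \dots, p_n\} \subseteq \{p_i, p_{i+1}\}$ for every $i$: any $p_l \in T_i$ with $l \notin \{i, i+1\}$ would allow splicing the edge $i \to \overline{p_l}$ into the original cycle to yield a cycle between $T$ and $P$ of length $(i-l) \bmod n + 1 \in \{2, \dots, n-1\}$. Suppose for contradiction $t$ occurs at $\ge 2$ positions of $T$. By Lemma~\ref{lem:t1}, $t$ occurs exactly twice: one occurrence sits at $T_{j_0-1}$ (from the original cycle), and the other at some $T_b$ with $b \ne j_0 - 1$. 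Applying the inclusion to $p_l = t = p_{j_0}$ and $i = b$ gives $j_0 \in \{b, b+1\}$ and hence $b = j_0$. Running the analogue of the Part 1 construction for each position of $q$, and using the inclusion to force any cycle through $j_0$ in $(T, P')$ into the shape $c_i = j_0 + i$ (ending with $q \in T_{j_0+k-1}$ for cycle length $k$), I deduce that minimality of the $T,P'$-cycle at length $n$ restricts the positions of $q$ in $T$ to $\{j_0 - 1, j_0\}$; combined with $q$ occurring $\ge 2$ times, $q \in T_{j_0 - 1} \cap T_{j_0}$. Now $t \ne q$ both lie in $T_{j_0-1} \cap T_{j_0}$, so $(j_0-1, \bar t), (j_0, \bar t), (j_0-1, \bar q), (j_0, \bar q)$ form a $4$-cycle in the spanning tree $G_T$, a contradiction.

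The main obstacle is establishing the inclusion $T_i \cap \{p_1, \dots, p_n\} \subseteq \{p_i, p_{i+1}\}$ and using it together with the induction $c_i = j_0 + i$ to pin down the positions of $q$ (and $t$) in $T$ tightly enough that the two occurrences of $t$ and the two occurrences of $q$ are forced into the same pair of coordinates $\{j_0 - 1, j_0\}$, creating the forbidden $4$-cycle. A secondary subtlety is the non-simple walk in Part 1 when $q$ coincides with some $p_m$, but this case only produces additional, shorter cycles and does not obstruct the argument.
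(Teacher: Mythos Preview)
Your argument follows the paper's: build an explicit cycle between $T$ and $P'$ by splicing the edge $q\in T_{j^\ast}$ into the arc of the original $T,P$-cycle running from $j_0$ to $j^\ast$, and for the second claim force the positions of both $t$ and $q$ into $\{j_0-1,\,j_0\}$ so that $t\ne q$ both lie in $T_{j_0-1}\cap T_{j_0}$, giving a forbidden $4$-cycle in the tree $G_T$. The paper normalizes $j_0=1$ and is terser; your explicitly stated inclusion $T_i\cap\{p_1,\dots,p_n\}\subseteq\{p_i,p_{i+1}\}$ (a consequence of the $T,P$-minimality) makes the localization of $t$ cleaner than the paper's phrasing, which attributes that step to the $T,P'$-minimality.
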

\begin{proof}
Without loss of generality, we may assume that $p_2 \in T_1,\ldots,p_n \in T_{n-1},p_1 \in T_1$ where we use $p_i$ to denote the lone element of $P_i$. We may also assume that $P'$ is obtained from $P$ by switching $p_1$ to $q$. Now since $q$ occurs twice in $T$, there exists $j \not = 1$ such that $q \in T_j$. This implies that we get a cycle of length $j$ between $T$ and $P$.

Now for the second claim, we must have $j=n$ in the previous cycle. Assume for the sake of contradiction that $t$ occurs at least twice. For the minimal length cycle between $T$ and $P'$ to have length $n$, we must have $t=p_1 \not \in T_2,\ldots,T_{n-1}$. This implies that $t=p_1 \in T_1,T_n$. But we also have $q \in T_1,T_n$. Since $p$ and $q$ are both contained in $T_1$ and $T_n$, this contradicts the fact that $G_T$ is a spanning tree of $K_{n,d}$.
\end{proof}

Using the above lemmas, we will show that a matching ensemble describes a triangulation of $\Delta_{n-1} \times \Delta_{d-1}$.

\begin{proposition}
\label{prop:metotom}
Let $\M$ be an $(n,d)$-matching ensemble. Then there is a triangulation $\T$ of $\Delta_{n-1} \times \Delta_{d-1}$ such that $ext(\T) = \M$.
\end{proposition}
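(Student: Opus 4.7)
The plan is to produce from $\M$ a family of spanning trees $\{T_a\}_{a\in H_{n-1,d}}$ of $K_{n,d}$, verify they satisfy the two hypotheses of Proposition~\ref{prop:triangfill}, and then check that the resulting triangulation $\T$ satisfies $ext(\T)=\M$. By duality of matching ensembles I may assume $n\le d$ throughout, so that Lemmas~\ref{lem:t1} and~\ref{lem:t2} apply directly.

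First I would build, for each $v\in H_{n,d}$, a tope candidate $A_v$ with $pos(A_v)=v$. When $|supp(v)|=n$ so $v=\mathbf{1}_J$ for a unique $J\in\binom{[d]}{n}$, set $A_v$ to be the tope directly read off from $M_{[n],J}\in\M$. When $|supp(v)|<n$, pick $j'\in[d]\setminus supp(v)$ and $j\in supp(v)$ with $v_j\ge 2$, form $v'=v-e_j+e_{j'}$ (whose support is one larger), and define $A_v$ from the previously constructed $A_{v'}$ by switching its unique $j'$-coordinate to $j$. Independence of this procedure from the choices $(j,j')$ is forced by the uniqueness axiom of matching ensembles, since any two reduction paths to $v$ induce, via Lemma~\ref{lem:MEminor}, the same matchings on every pair $(I,J)$. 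Then, mirroring Corollary~\ref{cor:tombaseunion}, for each $a\in H_{n-1,d}$ I define
\[
T_a \;:=\; \Bigl(\bigcup_{k=1}^d (A_{a+e_k})_i\Bigr)_{i\in[n]}.
\]
Mimicking the argument of Lemma~\ref{lem:topesoftom}, $G_{T_a}$ is connected, spans $K_{n,d}$, has right-degree vector $a$, and is acyclic (a cycle would force two distinct matchings between the same $(I,J)$-pair in $\M$). Hence $a\mapsto RD(T_a)$ is a bijection $H_{n-1,d}\to H_{n-1,d}$, settling the second hypothesis of Proposition~\ref{prop:triangfill}.

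The main obstacle is the compatibility hypothesis: I must show $T_a$ and $T_{a'}$ are compatible for every $a\ne a'$. Suppose not and take a cycle between them of minimum length $n'$; restrict attention to the $n'$ coordinates involved and replace $T_{a'}$ by a base tope $P=A_{a'+e_k}$ refining $T_{a'}$ that still cycles with $T_a$ at length $n'$. Lemma~\ref{lem:t1} rules out any element of $supp(P)$ appearing three times in $T_a$ and forbids every element of $supp(P)$ being doubled. Lemma~\ref{lem:t2} then provides a linkage move $P\stackrel{-t+q}{\to}P'$ for a $q$ doubled in $T_a$; using that the right-linkage axiom of $\M$ together with the uniqueness of matchings force $P'=A_{pos(P)-e_t+e_q}$, the new tope $P'$ is again in our family, still cycles with $T_a$ at length $n'$, and has $t$ occurring at most once in $T_a$. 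Iterating this move decreases the number of doubled elements of the current support against $T_a$; eventually Lemma~\ref{lem:t1}'s second clause must fail, yielding the desired contradiction.

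With both hypotheses in place, Proposition~\ref{prop:triangfill} produces the triangulation $\T=\{Q_{T_a}\}$. Finally $ext(\T)=\M$: for each pair $(I,J)$ with $|I|=|J|$, the extracted matching is the central tope of the minor $\OO|_{I,J}$ of the associated tropical oriented matroid, and by the construction of the $A_v$ combined with Lemma~\ref{lem:MEminor}, this central tope equals $M_{I,J}\in\M$.
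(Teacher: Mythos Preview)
Your outline mirrors the paper's---build topes, take base-point unions to get trees, apply Proposition~\ref{prop:triangfill}---but the paper carries this out by a \emph{double induction on $n$ and $d$}, and the steps you treat as routine all lean on that induction. Your well-definedness argument for $A_v$ when $|supp(v)|<n$ invokes Lemma~\ref{lem:MEminor}, but that lemma presupposes a tropical oriented matroid $\OO$ with $ext(\OO)=\M$, which is exactly what you are constructing; the appeal is circular. In the paper the $A_v$ are the topes of the tropical oriented matroids $\OO_{[n],[d]\setminus\{i\}}$, whose existence is the induction hypothesis, and uniqueness then comes for free from Lemma~\ref{lem:topesoftom}. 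The same circularity recurs when you claim $G_{T_a}$ is acyclic: its edges come from several different $A_{a+e_k}$, not from a single matching in $\M$, so a cycle in $G_{T_a}$ does not obviously yield two matchings in $\M$ on the same $(I,J)$; the paper instead glues the inductively known spanning trees of $K_{[n],[d]\setminus\{i\}}$ and $K_{[n],[d]\setminus\{j\}}$ along their common subtree.

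More seriously, your compatibility argument requires the minimal cycle between $T_a$ and $P$ to have length exactly $n$: otherwise $P$ is not a matching-tope, the right-linkage axiom of $\M$ does not apply to it, and the iteration via Lemma~\ref{lem:t2} cannot even begin. You never establish this. The paper does, and it is the technical heart of the proof: using the induction hypothesis (together with Lemma~\ref{lem:extuniq}) one knows that $\OO_{I,[d]}$ exists for every proper $I\subsetneq[n]$, and one shows---via Lemmas~\ref{lem:samecoordunion} and~\ref{lem:diffcoordunion}---that $T|_{I,[d]}$ and $P|_{I,[d]}$ are both types of $\OO_{I,[d]}$, hence compatible, so no shorter cycle can occur. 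Finally, two smaller slips: the quantity that decreases along the linkage iteration is the number of elements of $[d]\setminus supp(P)$ appearing at least twice in $T$, not the doubled elements inside $supp(P)$; and the endgame is not that Lemma~\ref{lem:t1} ``fails'' but that $pos(P'')$ is forced to be a base point of $T$, so $P''$ refines $T$, contradicting incompatibility.
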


\begin{proof}
We use induction on $n$. If $n=2$, we can think of each $2$-by-$2$ matchings in a matching ensemble $\M$ as putting an ordering on $[d]$. That is, if we have $(1,\bar{i})$ and $(2,\bar{j})$ in a matching between $[2]$ and $\{\bar{i},\bar{j}\}$, we say that $\bar{i} < \bar{j}$. And this gives a total ordering on $[d]$. For the sake of contradiction, assume we get $\bar{i_1} < \cdots < \bar{i_k}$ and $\bar{i_k} < \bar{i_1}$, and let $k$ be minimal among such cyclic relationships. Using the right linkage axiom, from the matching consisting of edges $(1,\bar{i_k})$ and $(2,\bar{i_1})$, we must get a matching that either
\begin{itemize}
\item consists of edges $(1,\bar{i_2})$ and $(2,\bar{i_1})$, or
\item consists of edges $(1,\bar{i_k})$ and $(2,\bar{i_2})$.
\end{itemize}

In the first case we get $\bar{i_2} < \bar{i_1}$, and in the second case we get $\bar{i_k} < \bar{i_2}$, where in both of the cases we get a contradiction. So we get a total ordering, and reorder the elements of $[d]$ such that $\bar{1} < \cdots < \bar{d}$. Now we construct tree-$(2,d)$-types $T^i := (\{1,\ldots,i\},\{i,\ldots,d\})$. The types are pairwise compatible, and by Proposition~\ref{prop:triangfill}, we get a triangulation $\T$ of $\Delta_{n-1} \times \Delta_{d-1}$. Since all the $2$-by-$2$ matchings are subgraphs of $T^i$'s, we have $ext(\T) = \M$. Moreover, any tree-$(2,d)$-type that is not one of the $T^i$'s is not compatible with the $(2,d)$-topes coming from $2$-by-$2$ matchings.

Hence we have proven the claim for $n=2$, the base case. Assume for the sake of induction that it is true for smaller values of $n$.

Now we use induction on $d$. If $d=2$, the result follows from the exactly same reasoning as $n=2$ case. When we have $d < n$, the result follows from induction hypothesis by taking the dual and using Lemma~\ref{lem:MEdual}. Hence we only need to consider the case when $d \geq n$. We assume for the sake of induction that the claim is true for smaller values of $d$.

Induction hypothesis and Lemma~\ref{lem:extuniq} tells us that there is a unique tropical oriented matroid $\OO_{I,J}$ for $\M|_{I,J}$, as long as $(I,J) \not = ([n],[d])$. Collect all topes of $\OO_{[n],[d] \setminus \{i\}}$ for each $i \in [d]$, to form a collection $\PP$. If $n=d$, also add the tope corresponding to the matching between $[n]$ and $[d]$.

If some tope $A \in {\OO_{[n],[d] \setminus \{i\}}}$ and $B \in {\OO_{[n],[d] \setminus \{j\}}}$ have the same position in $n\Delta_{[d]}$, it implies that $supp(A)=supp(B) \subseteq [d] \setminus \{i,j\}$, and hence $A$ and $B$ are topes of $\OO_{[n],[d] \setminus \{i,j\}}$, due to Lemma~\ref{lem:MEminor}. Then, Lemma~\ref{lem:topesoftom} tells us that $A$ and $B$ should be the same. Therefore, there is a bijection between $\PP$ and $H_{n,d}$.

Now let us see that the topes of $\PP$ are pairwise compatible. Assume for the sake of contradiction there are some topes $P$ and $P'$ that are not compatible. If $supp(P)=supp(P')$, then they are both topes of $\OO|_{[n],S}$ for some proper subset $S$ of $[d]$ and must be compatible. If not, then the length of the minimal cycle between $P$ and $P'$ has to be smaller than $n$. Let $I$ be the subset of $[n]$ which contains the coordinates involved in that cycle. By restricting $[n]$ to $I$, we get two incompatible topes in the tropical oriented matroid coming from the matching field $\M|_{I,[d]}$, and we get a contradiction.

Now for each unit lattice simplex, take the union of the topes corresponding to the base points of the simplex. We will show that this gives a spanning tree. Pick a unit simplex, which has right degree vector $a$. Denote the tope having position vector $a + e_i$ as $P^i$, define $\SSSS$ to be the collection of such topes. We denote $\SSSS_I$ to denote the set consisting of $P^i$'s for $i \in I$.

Assume there exists $i \not = j$ such that $a_i = a_j = 0$. By induction hypothesis, the set of topes $\SSSS_{[d] \setminus \{i\}},\SSSS_{[d] \setminus \{j\}},\SSSS_{[d] \setminus \{i,j\}}$ are topes of a tropical oriented matroid corresponding to $\M_{[n],[d] \setminus \{i\}},\M_{[n],[d] \setminus \{j\}},\M_{[n],[d] \setminus \{i,j\}}$ respectively. Each of their union forms spanning trees $T^i,T^j,T^{i,j}$ of $K_{[n],[d] \setminus \{i\}},K_{[n],[d] \setminus \{j\}},K_{[n],[d] \setminus \{i,j\}}$ respectively. Now since $T^{i,j}$ is a subgraph of both $T^i$ and $T^j$, the union of $T^i$ and $T^j$, which is the union of the topes of $\SSSS$, has to be a spanning tree of $K_{[n],[d]}$.

What remains is the case when there is only one $i$ such that $a_i=0$. Since $n \leq d$, this means that $n=d$ and $pos(P^i) = (1,\ldots,1)$. Let $j$ be the coordinate such that $P_j^i = \{i\}$. The tope $A$ we get from $P^i$ be removing the $j$-th coordinate, corresponds to a matching which is already in $\M$, and hence there is a tope $P'$ of $\OO_{[n],[d] \setminus \{i\}}$ which restricts to $A$. Since $pos(P') = a + e_t$ for some $t$, we have $P' = P^t$. Corollary~\ref{cor:tombaseunion} tells us that the union of topes in $\SSSS_{[d] \setminus \{i\}}$ forms a spanning tree of $K_{[n], [d] \setminus \{i\}}$. Since this tree contains $G_{P'}$, the union of topes in $\SSSS$, which is same as the union of this tree and $G_{P^i}$, forms a spanning tree of $K_{[n],[d]}$.

Therefore, we get a spanning tree for each unit lattice simplex of $n\Delta_{d-1}$, and we will denote this set as $\SSS$. We will now show that the trees of $\SSS$ are compatible with the topes of $\PP$.

Our first step is to show that if $T$ and $P$ are incompatible, then the length of the cycle between $T$ and $P$ has to be $n$. To show this, we will prove that $P|_{I,[d]}$ and $T|_{I,[d]}$ are types in $\OO_{I,[d]}$ for an arbitrary subset $I$ of $[n]$. First for $P$, if $P \in \OO_{[n],[d] \setminus \{i\}}$ for some $i \in [d]$, we know that $P|_{I,[d]}$ is a tope of $\OO_{I,[d] \setminus \{i\}}$, which is a subset of $\OO_{I,[d]}$. If not, then we have $n=d$ and $P$ is the tope corresponding to the matching between $[n]$ and $[d]$ in $\M$. Since a restriction of a matching is also contained in the ensemble, it follows that $P|_{I,[d]}$ is a tope of $\OO_{I,[d]}$.

Next, we look at $T$. Let $a$ be the right degree vector of $T$. First consider the case when there is some $i$ and $j$ such that $a_i = a_j = 0$. Let $A$ be a type of $\OO_{[n],[d] \setminus \{i\}}$ obtained by deleting $i$ from $T$. Similarly, let $B$ be a type of $\OO_{[n],[d] \setminus \{j\}}$ obtained by deleting $j$ from $T$. We know that $A|_{I,[d]}$ and $B|_{I,[d]}$ are types in $\OO_{I,[d]}$. Hence by Lemma~\ref{lem:diffcoordunion}, we know that the union of $A|_{I,[d]}$ and $B|_{I,[d]}$, which is exactly $T|_{I,[d]}$, is a type in $\OO_{I,[d]}$. For the other case, when there is only one $i$ such that $a_i=0$, there has to be some $j$ such that 
\begin{itemize}
\item $j$ appears twice in $T$ and,
\item there is a $k \in [n]$ such that $T_k = \{j\}$.
\end{itemize}

Without loss of generality, let us assume that $k=1$. Let $A$ be a type of $\OO_{[n],[d] \setminus \{i\}}$ obtained by deleting $i$ from $T$. We know that there is a type $B$ in $\OO_{[n] \setminus \{1\},[d]}$ that restricts to $T|_{I \setminus \{1\},[d]}$. Using exchange in $\OO_{I,[d]}$, we know that a type $C$, which is given by $C_1 = A_1 \cup B_1$ and $C_i = A_i$ for all $i \not =1$ is a type in $\OO_{I,[d]}$. Now using Lemma~\ref{lem:diffcoordunion} between $B$ and $C$, we can see that $T|_{I,[d]}$ is a type in $\OO_{I,[d]}$.





Therefore, if there is a length $k<n$ cycle between $T$ and $P$, let $i$ be the coordinate that is not involved in the cycle. We have shown in the previous paragraphs that $T|_{[n] \setminus \{i\},[d]}$ and $P|_{[n] \setminus \{i\},[d]}$ are types of $\OO_{[n] \setminus \{i\},[d]}$, and they must be incompatible, hence we get a contradiction. Therefore, any cycle between $T \in \SSS$ and $P \in \PP$ has to have length equal to $n$. This also implies that $P$ has to be a tope corresponding to a matching in $\M$.

If there is some $q \in [d] \setminus supp(P)$ that appears at least twice in $T$, the right linkage property of $\M$ tells us that there is some tope $P' \in \PP$ such that $P \stackrel{-t+q}\rightarrow P'$, where $t$ is some element of $[d]$. By Lemma~\ref{lem:t2}, $P'$ and $T$ are also incompatible. Cycle between $P'$ and $T$ also has to have length $n$, and again by using Lemma~\ref{lem:t1} and Lemma~\ref{lem:t2}, $t$ has to occur exactly once in $T$. Therefore, we can repeat this process till we get a tope $P''$, which is incomparable with $T$, and every element of $[d] \setminus supp(P'')$ appears exactly once in $T$. Set this $P''$ as our new $P$. In such case, Lemma~\ref{lem:t1} implies that $pos(P)$ is a base point of the unit simplex located at $RD(T)$. Then $P$ is the tope used in the union to construct $T$. This contradicts the fact that $P$ and $T$ are incompatible.

Therefore, we have shown that any $T \in \SSS$ and any $P \in \PP$ are compatible. This implies that any tope we can get by refining some $T \in \SSS$ must be in $\PP$. If $T,T' \in \SSS$ are incompatible, we can find a tope $P$ by refining $T'$ such that $T$ and $P$ are incompatible. Therefore, any pair of spanning trees in $\SSS$ are pairwise compatible. Since we have a spanning tree for each unit lattice simplex of $n\Delta_{d-1}$, Lemma~\ref{prop:triangfill} tells us that we get a triangulation, and hence a tropical oriented matroid $\OO$ such that $ext(\OO) = \M$.

\end{proof}

The above proposition, along with Proposition~\ref{prop:tomtome} proves the main result of the paper:

\begin{theorem}
\label{thm:main}
There is a bijection between matching ensembles (with parameters $(n,d)$) and triangulations of $\Delta_{n-1} \times \Delta_{d-1}$.
\end{theorem}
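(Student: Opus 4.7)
The plan is to package the results already established in the previous section into the claimed bijection, using tropical oriented matroids as the intermediate object. By the theorem of Ardila--Develin (as cited earlier in the paper), generic tropical oriented matroids with parameters $(n,d)$ are in bijection with triangulations of $\Delta_{n-1}\times \Delta_{d-1}$ via $\OO \mapsto \{Q_{G_A} : A \text{ a tree-type of } \OO\}$. So it suffices to exhibit a bijection between generic $(n,d)$-tropical oriented matroids and $(n,d)$-matching ensembles.

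The forward map is extraction: to a tropical oriented matroid $\OO$ we assign the matching field $\M_\OO$ of central topes of the minors $\OO|_{I,J}$. Proposition~\ref{prop:tomtome} shows this output is actually a matching ensemble (the right-linkage axiom follows from the tope-linkage-property of Lemma~\ref{lem:topelink}, and the left-linkage axiom follows from the same statement applied to $\OO^T$ together with Lemma~\ref{lem:MEdual}). So the assignment $\OO \mapsto \M_\OO$ is well-defined.

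Injectivity is exactly Lemma~\ref{lem:extuniq}: if $\M_\OO = \M_{\OO'}$ but $\OO \neq \OO'$, then by Theorem~\ref{thm:topestom} and Lemma~\ref{lem:topesoftom} there are topes $A\in\OO$ and $B\in\OO'$ with $pos(A)=pos(B)$ but $A\neq B$, which yields a cycle between them whose restriction to the involved coordinates produces two distinct central topes for the same minor, contradicting the matching-ensemble data. Surjectivity is Proposition~\ref{prop:metotom}: given any $(n,d)$-matching ensemble $\M$, it constructs, by induction on $n$ and on $d$ (using duality via Lemma~\ref{lem:MEdual} when $d < n$), a triangulation $\T$, equivalently a tropical oriented matroid $\OO$, with $\M_\OO = \M$.

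Since the correspondence between generic $(n,d)$-tropical oriented matroids and triangulations of $\Delta_{n-1}\times \Delta_{d-1}$ is already known, composing with the bijection $\OO \leftrightarrow \M_\OO$ produces the desired bijection between triangulations and matching ensembles, proving Theorem~\ref{thm:main}. The only real content was already absorbed into Proposition~\ref{prop:tomtome}, Lemma~\ref{lem:extuniq}, and Proposition~\ref{prop:metotom}; the theorem is their formal combination.
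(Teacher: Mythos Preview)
Your proposal is correct and follows essentially the same approach as the paper: the theorem is stated immediately after Proposition~\ref{prop:metotom} with the remark that it, together with Proposition~\ref{prop:tomtome}, yields the result. Your write-up is in fact slightly more explicit than the paper's, since you separately invoke Lemma~\ref{lem:extuniq} for injectivity and the Ardila--Develin/Oh--Yoo bijection with tropical oriented matroids, both of which the paper leaves implicit in that one-line justification.
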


\begin{question}
What would be the matching ensemble analogue for subdivisions of $\Delta_{n-1} \times \Delta_{d-1}$?
\end{question}


\bibliographystyle{plain}    
\bibliography{rs}        

\begin{thebibliography}{1}

\bibitem{2007arXiv0706.2920A}
F.~{Ardila} and M.~{Develin}.
\newblock Tropical hyperplane arrangements and oriented matroids.
\newblock {\em Math. Z.}, 262:795--816, 2009.

\bibitem{MF}
D.~Bernstein and A.~Zelevinsky.
\newblock Combinatorics of maximal minors.
\newblock {\em J. Algebraic Combin.}, 2(2):111--121, 1993.

\bibitem{MR2957992}
S.~Horn.
\newblock A topological representation theorem for tropical oriented matroids.
\newblock In {\em 24th {I}nternational {C}onference on {F}ormal {P}ower
  {S}eries and {A}lgebraic {C}ombinatorics ({FPSAC} 2012)}, Discrete Math.
  Theor. Comput. Sci. Proc., AR, pages 135--146. Assoc. Discrete Math. Theor.
  Comput. Sci., Nancy, 2012.

\bibitem{MR2820754}
S.~Oh and H.~Yoo.
\newblock Triangulations of {$\Delta_{n-1}\times\Delta_{d-1}$} and tropical
  oriented matroids.
\newblock In {\em 23rd {I}nternational {C}onference on {F}ormal {P}ower
  {S}eries and {A}lgebraic {C}ombinatorics ({FPSAC} 2011)}, Discrete Math.
  Theor. Comput. Sci. Proc., AO, pages 717--728. Assoc. Discrete Math. Theor.
  Comput. Sci., Nancy, 2011.

\bibitem{Postnikov01012009}
A.~Postnikov.
\newblock {Permutohedra, associahedra, and beyond}.
\newblock {\em Int. Math. Res. Not.}, 2009(6):1026--1106, 2009.

\bibitem{SantosCayley}
F.~{Santos}.
\newblock {The Cayley trick and triangulations of products of simplices}.
\newblock {\em Contemp. Math.}, 374:151--177, 2003.

\end{thebibliography}
 
\end{document}